\documentclass[reqno,12pt]{amsart}
\usepackage{amsmath,amsthm,amssymb}

\date{}

\setlength{\textwidth}{15truecm}
\setlength{\textheight}{24truecm}
\setlength{\oddsidemargin}{0cm}
\setlength{\evensidemargin}{0cm}
\setlength{\topmargin}{-45pt}

\newtheorem{theorem}{Theorem}[section]
\newtheorem{lemma}[theorem]{Lemma}
\newtheorem{corollary}[theorem]{Corollary}
\newtheorem{proposition}[theorem]{Proposition}

\newtheorem{remark}[theorem]{Remark}
\newtheorem{definition}[theorem]{Definition}

\numberwithin{equation}{section}

\title[Transportation functionals and log-Minkowski problem]{Mass transportation functionals on the sphere with applications to the logarithmic Minkowski problem }
\author[Alexander Kolesnikov]{Alexander V. Kolesnikov}
\address{National Research University Higher School of Economics, Russian Federation}
\email{Sascha77@mail.ru}
\subjclass[2010]{Primary: 52A40, 90C08} 
\keywords{convex bodies, optimal transportation, Kantorovich duality,  log-Minkowski problem, K{\"a}hler-Einstein equation }

\thanks{
The  author was supported by  RFBR
 project 17-01-00662 and  DFG project RO 1195/12-1. This work has been funded by the  Russian Academic Excellence Project '5-100' and supported in part by the Simons Foundation.
}

  \begin{document}
  
  \begin{abstract}
  We study the transportation problem on the unit sphere  $S^{n-1}$ for symmetric probability measures and the cost function $c(x,y) = \log \frac{1}{\langle x, y \rangle}$.
  We calculate the variation of the corresponding Kantorovich functional $K$ and study a naturally associated metric-measure space on $S^{n-1}$ endowed with a Riemannian
  metric generated by the corresponding transportational potential.  We introduce a new transportational functional which minimizers  are 
  solutions to the symmetric log-Minkowski problem and prove that $K$ satisfies the following analog of the Gaussian transportation inequality for the uniform probability measure ${\sigma}$ on $S^{n-1}$:
  $\frac{1}{n} Ent(\nu) \ge K({\sigma}, \nu)$. It is shown that there exists a remarkable similarity between our results and the theory of the K{\"a}hler-Einstein equation on Euclidean space.
  As a by-product we obtain a new proof of uniqueness of solution to the log-Minkowski problem for the uniform measure.  
  \end{abstract}
  \maketitle

  \section{Introduction}
  
  We start with explanations and representation of some related results in the Euclidean case.
  Let $\mu=e^{-V} dx$, $\nu = e^{-W} dx$ be probability measures on $\mathbb{R}^n$ and $x \to \nabla \Phi (x)$ be the optimal transportation mapping
  pushing  forward $\mu$ onto $\nu$. The potential $\Phi$ is  a convex function solving the dual Kantorovich problem with quadratic cost. If the densities of 
  measures are sufficiently regular, $\Phi$ solves the related  Monge--Amp{\`e}re equation
  \begin{equation}
  \label{mae}
  e^{-V} = e^{-W(\nabla \Phi)} \det D^2 \Phi
  \end{equation} (see \cite{Villani}, \cite{BoKo} for details).
  The related Kantorovich functional
  $$
  W^2_2(\mu,\nu)  = \inf_{\pi \in \Pi(\mu,\nu)} \int_{\mathbb{R}^n \times \mathbb{R}^n} |x-y|^2 d \pi
  $$
  induces a metric on the space of probability measures with finite second moments and satisfies $W_2^2(\mu,\nu)  = \int |x - \nabla \Phi(x)|^2 d \mu$;
  here $\Pi(\mu,\nu)$  is the space of measures on $\mathbb{R}^n \times \mathbb{R}^n$ with marginals $\mu, \nu$ .
  
  Numerous powerful applications of the optimal transportation theory
  are based on the use of differential structures  on the space $\mathcal{P}_2$ of probability measures  endowed with  metric $W_2$.
  It is by now a classical fact that many  evolutionary equations can be interpreted as gradients flows on $\mathcal{P}_2$. The reader can find a comprehensive representation in  \cite{AGS}
  (see also  \cite{Villani}, \cite{Villani2}, \cite{BoKo}).
   An important related notion which was introduced by R.~McCann is the displacement convexity property, which means 
  convexity along the geodesics in the Kantorovich metric.
  
  To show displacement  convexity of a functional one has to compute its second order derivatives along the geodesics. 
  The corresponding calculus relies on the use of the Monge-Amp\`ere operator and its linearized versions.
  One of such versions is given
 by the following formula:
 \begin{equation}
 \label{Lf}
 L f = {\rm Tr} (D^2 \Phi)^{-1} D^2 f  - \langle \nabla f, \nabla W(\nabla \Phi) \rangle.
 \end{equation}
 This operator  naturally appears with differentiation of  (\ref{mae}).
 In particular, the simple linear variation of the source measure $\mu_{\varepsilon} = \mu (1 + \varepsilon v)$  by a function $v$ with zero mean
 corresponds to the variation of the potential $\Phi_{\varepsilon}  = \Phi + \varepsilon u + o(\varepsilon)$, where 
 $$
 Lu = v.
 $$
  Connection of this formula  to  differential calculus on $\mathcal{P}_2$  is explained in Section~2.
  It was observed in \cite{Kol} that
  $L$ is the generator 
 of the symmetric  Dirichlet form
  $$
  \mathcal{E}(f,g) = \int \langle (D^2 \Phi)^{-1} \nabla f, \nabla g \rangle d \mu = - \int L f g d \mu.
  $$ 
  Let us endow  $\mathbb{R}^n$ with the Riemannian metric $D^2 \Phi$.
    The related metric-measure space $(\mu, D^2 \Phi)$ is a natural geometric and probabilistic object, it  has been studied in 
   \cite{CK}, \cite{Fathi}, \cite{K_moment}, \cite{K_part_I}, \cite{KK}, \cite{KK2}, \cite{KK3}, \cite{Kol}, \cite{KolMil}.

  Of particular interest is the following special case:
  \begin{equation}
  \label{moment-m}
  \mu = e^{-\Phi} dx.
  \end{equation}
  Following the teminology from \cite{CK} we say that $\nu$ is a moment measure if there exists another probability measure 
  $\mu$ of the form  (\ref{moment-m}) such that $\nu$ is the image of $\mu$ under $\nabla \Phi$.
  The most general sufficient condition for $\nu$ to be a moment measure was established by B.~Klartag and D.~Cordero-Erausquin  in \cite{CK}.
  
  It   is known that 
$\Phi$ is the unique maximum point of the following functional:
\begin{equation}
\label{prekopa0}
J(f) = \log \int e^{-f^*} dx  - \int f d \nu,
\end{equation}
where $f^*$ is the Legendre transform of $f$. This fact was used in \cite{CK} to establish well-posedness
of the moment measure problem. 
Another natural functional which minimizers solve the same problem was suggested by F.~Santambrogio in \cite{S}. The following  Gaussian version of this functional was  studied in \cite{KolKos}:
\begin{equation}
\label{GaussMM}
\mathcal{F} (g \cdot \gamma) = {Ent}_{\gamma}(g) - \frac{1}{2} W^2_2(g \cdot \gamma ,\nu),
\end{equation}
  where $\gamma$ is the standard Gaussian measure, $ {Ent}_{\gamma}(g) = \int g \log g d \gamma$ is the Gaussian entropy,
  and $W_2(g \cdot \gamma ,\nu)$  is the Kantorovich distance between $g \cdot \gamma$ and $\nu$.
  In the  particular case $\nu = \gamma$ 
  the minimum of $\mathcal{F}$ equals zero, it is attained at $g=1$. The positivity of the functional 
  $
\mathcal{F} (g \cdot \gamma) 
  $
 for $\nu = \gamma$  is equivalent to the Talagrand transportation inequality
  \begin{equation}
  \label{Tal}
{Ent}_{\gamma}(g) \ge \frac{1}{2} W^2_2(g \cdot \gamma,  \gamma)
  \end{equation}
  (see \cite{BGL}).  See \cite{KolKos}, \cite{Fathi2} for further information on relations between (\ref{GaussMM}) and stability estimates for Gaussian inequalities
  and \cite{FGJ} for variational approach to other transportation inequalities.
  The main result of \cite{Fathi2} is the following estimate:
 $$\mathcal{F} (g \cdot \gamma) \ge - {Ent}_{\gamma}(\nu)$$ provided either $\nu$ or $g \cdot \gamma$
 has zero mean.
 
 In this work we develop a similar formalism on the unit sphere $S^{n-1} \subset \mathbb{R}^n$. Instead of Riemannian analog of $W_2$ on  $S^{n-1}$ 
 we shall work with the functional
    $$
 K(\mu,\nu) = \min_{\pi \in \Pi(\mu,\nu)} \int_{(S^{n-1})^2} c(x,y) d \pi,
 $$ 
 where
 $$
 c(x,y) = \left\{\begin{array}{cc}
             \log \frac{1}{\langle x, y \rangle}, & \langle x, y \rangle > 0\\
             + \infty, &  \langle x, y \rangle \le 0.
           \end{array}\right.
 $$ 
 and $\mu,\nu$ are probability measures on $S^{n-1}.$
  
 The importance of this functional for convex geometry was revealed by V.~Oliker in \cite{Oliker}.
He proved that  the Kantorovich problem with the cost function $c$
is in a sense equivalent to a classical problem from convex geometry, the so-called Aleksandrov problem.
See also \cite{Bert}, \cite{OlikerGangbo}, \cite{KKN}.
The corresponding optimal transportation mapping has the form
  \begin{equation}
  \label{h-tran}
  T(x) = \frac{h(x) \cdot x + \nabla_{S^{n-1}} h(x)}{\sqrt{h^2(x) + |\nabla_{S^{n-1}} h(x)|^2}},
  \end{equation}
  where $h$ is a support function of a convex body contaning the origin. 
 
 In Section 3 we prove that  (\ref{Lf})  admits the following spherical analog for a couple of probability measures with densities
  $$
   \mu = e^{-V} \cdot \sigma, \ \ \nu = e^{-W} \cdot \sigma,
   $$
   and the corresponding optimal transportation mapping (\ref{h-tran}):
$$ L_{\mu,\nu} \Bigl( \frac{u}{h}\Bigr)  = {\rm Tr}  (D^2 h)^{-1} D^2 u - \Big\langle \nabla_{S^{n-1}} W(T) + n T,
   \frac{ux + \nabla_{S^{n-1}} u}{\sqrt{h^2 + |\nabla_{S^{n-1}} h|^2}} \Big\rangle + \frac{u}{h}.
   $$
   Here $\sigma$  is the probability uniform measure on $S^{n-1}$,
   $$D^2 f = f  \cdot {\rm Id} + \nabla^2_{S^{n-1}} f,$$ $\nabla_{S^{n-1}} $ is the gradient on $S^{n-1}$, and $\nabla^2_{S^{n-1}}$ is the Hessian operator on 
   $S^{n-1}$.
   The associated Dirichlet form : 
    $$
\mathcal{E}_{\mu,\nu}(f,g) = \int_{S^{n-1}}  h { \Big\langle ( D^2 h)^{-1} \nabla_{S^{n-1}}  f, \nabla_{S^{n-1}} g  \Big\rangle} d \mu  = -  \int_{S^{n-1}} f  L_{\mu,\nu} g d \mu.
 $$
    A particular case of  $L_{\mu,\nu}$ has been studied by E.~Milman and the author in \cite{KM} in respect to the log-Brunn-Minkowski conjecture
(see works   of B\"or\"oczky, Lutwak, Yang, and Zhang  \cite{BLYZ0}, \cite{BLYZ}).
It is conjectured that a reinforcement of the Brunn--Minkowski inequality
holds within the class of all symmetric convex bodies. We don't  give a complete list of references 
concerning this problem, because it is too long. The readers are advised to consult the papers 
\cite{BLYZ0},  \cite{BLYZ}, \cite{CLM}, \cite{CFM},  \cite{KM}, \cite{Saroglou} and the references therein.
The  log-Brunn--Minkowski inequality implies that the log-Minkowski problem has a unique solution.

{\bf Log-Minkowski problem:}
Given a (symmetric) probability measure $\mu$ on $S^{n-1}$  find a (symmetric) convex body $\Omega \ni 0$ of volume $1$ such that  $\mu$ is the cone measure of $\Omega$.

Note that under additional assumption that $\mu$ has a (sufficiently regular) density
 $\mu = \rho_{\mu} dx$  the support function $h$ of $\Omega$ must satisfy the following equation of the   Monge--Amp{\`e}re type:
\begin{equation}
\label{mucone}
\rho_{\mu} = \frac{1}{n} {h \det D^2 h}.
\end{equation}

The main result of \cite{KM} states that a local version of the even log-Brunn--Minkowski conjecture
is equivalent to the second eigenvalue problem for the operator  $ L_{\mu,\nu} $, where $\mu$  is given by
(\ref{mucone})  and $\nu = \mu \circ T^{-1}$, where $T$ is given by (\ref{h-tran}).
It this case $L_{\mu,\nu}$ has the form
 $$
   L_{\mu,\nu} \Bigl( \frac{u}{h} \Bigr) =  {\rm Tr}  (D^2 h)^{-1} D^2 u - (n-1) \frac{u}{h}.
$$
From now let us restrict ourselves to the symmetric case: it is assumed that all the sets are symmetric and functions are even.
A necessary and sufficient condition (subspace concentration condition) for existence of a solution to the even  log-Minkowski probem  has been established in 
\cite{BLYZ}. It turns out that any minimum point of the functional
\begin{equation}
\label{logh}
h \to \int_{S^{n-1}} \log h d \mu
\end{equation}
with the constraint $|\Omega_h|=1$ is a solution to the log-Minkowski problem for $\mu$.

Following the idea from \cite{S} we introduce another functional which minimizers are solutions to the even log-Minkowski problem.
The spherical entropy of a measure $m$ is defined as follows:
$$
 Ent(m) = \left\{\begin{array}{cc}
             \int \rho \log \rho  d {\sigma} , &  {\rm if} \  m= \rho \cdot {\sigma} \\
             + \infty, &  \ {\rm otherwise} .
           \end{array}\right.
 $$ 
\begin{theorem}
The minimizers of the functional
\begin{equation} 
\label{sph-logBM}
 F(\nu) = \frac{1}{n} Ent(\nu) - K(\mu,\nu),
\end{equation}
are solutions to the log-Minkowski problem for $\mu$.
\end{theorem}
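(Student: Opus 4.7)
The strategy is standard in the variational theory of transport functionals: derive the Euler--Lagrange equation of $F$ and identify its solutions with cone measures. Let $\nu_* = g\,{\sigma}$ be a minimizer in the symmetric class and consider admissible variations $\nu_\varepsilon = (g+\varepsilon\eta)\,{\sigma}$ with $\eta$ even and $\int \eta\,d{\sigma} = 0$. A direct computation gives $\frac{d}{d\varepsilon}\bigl|_{\varepsilon=0}\,Ent(\nu_\varepsilon) = \int \log g \cdot \eta\,d{\sigma}$, while Kantorovich duality together with the envelope theorem yields $\frac{d}{d\varepsilon}\bigl|_{\varepsilon=0}\,K(\mu,\nu_\varepsilon) = \int \psi\cdot\eta\,d{\sigma}$, where $\psi$ is an optimal $c$-concave Kantorovich potential for the pair $(\mu,\nu_*)$.

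Next I identify $\psi$ geometrically. By Oliker's theorem, the optimal transport from $\mu$ to $\nu_*$ has the form (\ref{h-tran}) for some symmetric convex body $\Omega$ with support function $h$; let $\rho$ denote the radial function of $\Omega$. I claim that
\[
\varphi(x) = -\log h(x),\qquad \psi(y) = \log \rho(y)
\]
is an optimal dual pair. The admissibility condition $\varphi(x)+\psi(y)\le c(x,y)$ reduces to $\rho(y)\langle x,y\rangle \le h(x)$, which is just the defining property of the support function applied to the boundary point $\rho(y)y\in\partial\Omega$; equality holds precisely when $y = T(x)$, i.e.\ on the graph of the optimal transport, so this pair achieves the dual supremum and $\psi = \log\rho$ up to an additive constant on the support of $\nu_*$.

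Combining the two first-variation formulas yields the stationarity condition $\frac{1}{n}\log g = \log \rho + c_0$ for a constant $c_0$, i.e.\ $g = A\rho^n$ with $A>0$. Normalizing $\int g\,d{\sigma} = 1$ together with the identity $\int \rho^n\,d{\sigma} = n|\Omega|$ forces
\[
\nu_* = \frac{\rho^n}{n|\Omega|}\,{\sigma}.
\]
I now rescale $\Omega$ to unit volume; this leaves both $T$ and $\nu_*$ unchanged because (\ref{h-tran}) is invariant under $h\mapsto \lambda h$. With $|\Omega|=1$ the right-hand side is exactly the cone measure of $\Omega$ on $S^{n-1}$ read in the radial parametrization. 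Since $T$ intertwines the radial and Gauss-map parametrizations of the cone measure of $\Omega$, and $T$ also pushes $\mu$ forward to $\nu_*$ by construction, the injectivity of $T$ identifies $\mu$ with the cone measure of $\Omega$ in the Gauss-map parametrization; equivalently, the density of $\mu$ equals $\frac{1}{n}h\det D^2 h$, which is exactly (\ref{mucone}). Hence $\Omega$ solves the log-Minkowski problem for $\mu$.

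The principal technical obstacle is the rigorous justification of the envelope theorem for the logarithmic cost: one gets one inequality by using the optimal pair for $\nu_*$ as a dual competitor for $\nu_\varepsilon$, the reverse inequality by the same move with the pair for $\nu_\varepsilon$, and then passes to the limit $\varepsilon\to 0$ by continuity of optimal potentials under weak perturbations of the marginal. A secondary issue is regularity: the pointwise identification of densities above strictly requires enough smoothness of $h$, $\rho$, and $T$, which in the general case must be handled either by approximation or by interpreting (\ref{mucone}) in a suitable generalized (Alexandrov) sense.
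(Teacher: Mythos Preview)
Your first-variation argument is correct and is essentially the content of the paper's Proposition~\ref{stationary}, which the author explicitly calls a ``formal proof'' and then supersedes. Two differences are worth noting. First, the paper obtains the variation formula $\delta_\omega K(\mu,\nu)=\int \log r\,\omega\,d\nu$ not via Kantorovich duality and the envelope theorem, but by differentiating the Monge--Amp\`ere change-of-variables identity and using the Dirichlet-form integration-by-parts of Theorem~\ref{ibp} (see Corollary~\ref{varnu}); this sidesteps the stability-of-potentials issue you flag at the end. Second, and more importantly, the paper's rigorous proof (Theorem~\ref{010718}) abandons the Euler--Lagrange route altogether: after normalizing $|\Omega_h|=1$ it establishes the exact decomposition
\[
F(\nu)=\int_{S^{n-1}}\log h\,d\mu+\tfrac{1}{n}\log|B|+\tfrac{1}{n}\,Ent_m(\nu),\qquad m=\tfrac{|B|}{|\Omega_h|}\,r^n\cdot\sigma,
\]
and since $Ent_m(\nu)\ge 0$ with equality iff $\nu=m$, minimizing $F$ is reduced to minimizing $F_0(h)=\int\log h\,d\mu$ over $\{|\Omega_h|=1\}$. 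The existence and log-Minkowski property of minimizers of $F_0$ is then imported from B\"or\"oczky--Lutwak--Yang--Zhang. This buys the paper both existence of a minimizer of $F$ and the identification of \emph{all} minimizers with solutions of the log-Minkowski problem, without any smoothness assumption on $h$ or any differentiation of $K$. Your approach is more self-contained and conceptually transparent, but to make it rigorous you would still owe the two points you list: the envelope-theorem step for the singular cost $c$, and either regularity of $(h,r)$ or an Alexandrov-sense interpretation of $\rho_\mu=\tfrac{1}{n}h\det D^2 h$.
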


Note that (\ref{logh}) and (\ref{sph-logBM}) are in remarkable corespondence with (\ref{prekopa0}) and (\ref{GaussMM}).
Some other connections between the moment-measure problem and the log-Minkowski problem has been mentioned in \cite{CK}.
However, we should stress that unlike Gaussian or Euclidean case (see \cite{S}, \cite{KolKos}) it is not clear whether  $F$ is 
displacement convex. Note that the strong displacement convexity of $F$ would imply uniqueness of the solution to the
log-Minkowski problem.

  On the other hand, we were able to prove the following analog of the Gaussian transportation inequality (\ref{Tal}).
  \begin{theorem}
  Every symmetric measure $\nu$ on $S^{n-1}$  satisfies
  $$
   \frac{1}{n} Ent(\nu) \ge K({\sigma}, \nu).
   $$
   \end{theorem}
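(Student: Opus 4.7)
The plan is to pass through the Monge--Amp\`ere formulation of the $c$-optimal transport and reduce the statement to a scale-invariant geometric inequality for support functions. Let $\nu=\rho\cdot\sigma$ be a symmetric probability measure and let $h$ be the support function of the symmetric convex body whose associated map (\ref{h-tran}) optimally transports $\sigma$ onto $\nu$ (existence by Oliker's theorem). Formula (\ref{h-tran}) yields $\langle x,T(x)\rangle=h/\sqrt{h^2+|\nabla_{S^{n-1}}h|^2}$, so
\[
n K(\sigma,\nu)=\frac{n}{2}\int_{S^{n-1}}\log\!\Bigl(1+\frac{|\nabla_{S^{n-1}}h|^2}{h^2}\Bigr)\,d\sigma.
\]
The Jacobian of $T\colon S^{n-1}\to S^{n-1}$ computed in Section~3 equals $J_T=h\det D^2h/(h^2+|\nabla_{S^{n-1}}h|^2)^{n/2}$, so the Monge--Amp\`ere equation $\rho(T)=1/J_T$ combined with the change of variables $Ent(\nu)=-\int\log J_T\,d\sigma$ gives, after cancellation of the $(h^2+|\nabla h|^2)$-terms,
\[
Ent(\nu)-n K(\sigma,\nu)=-\int_{S^{n-1}}\log\frac{\det D^2h}{h^{n-1}}\,d\sigma.
\]
The theorem is thus equivalent to the scale-invariant bound
\[
\int_{S^{n-1}}\log\frac{\det D^2h}{h^{n-1}}\,d\sigma\le 0\qquad(\ast)
\]
for the support function of every symmetric convex body, with equality at balls.

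To prove~($\ast$), I would combine the variational framework of the previous theorem with a direct second-order analysis at $h\equiv 1$. Writing $F(\nu)=-\tfrac{1}{n}\int\log(\det D^2h/h^{n-1})\,d\sigma$, the previous theorem shows that any minimizer of $F$ is a log-Minkowski solution for $\sigma$; the ball itself is one such solution (constant support function giving the identity transport), so $F(\sigma)=0$ and $\inf F\le 0$. For the matching lower bound, a computation of the second variation of the left-hand side of~($\ast$) at $h\equiv 1$ along an even perturbation $f$, using $\log\det(I+B)=\mathrm{tr}(B)-\tfrac{1}{2}\mathrm{tr}(B^2)+O(B^3)$, integration by parts, and Bochner's formula on $S^{n-1}$ (Ricci $=(n-2)g$), yields
\[
\delta^2\Big|_{h\equiv 1}(f,f)=n\int|\nabla f|^2\,d\sigma-\int(\Delta f)^2\,d\sigma.
\]
For \emph{even} $f$ the lowest non-trivial Laplace eigenvalue on $S^{n-1}$ is $\lambda_2=2n$, so $\int(\Delta f)^2\ge 2n\int|\nabla f|^2$ and the second variation is $\le -n\int|\nabla f|^2\le 0$; thus $h\equiv 1$ is a local maximum of the left-hand side of~($\ast$) within the symmetric class.

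The main obstacle is the passage from this local estimate at $h\equiv 1$ to the global inequality~($\ast$), since the functional is neither concave nor convex on the cone of support functions. For \emph{general} (not symmetric) convex bodies the analogue of~($\ast$) is equivalent to the log-Brunn--Minkowski conjecture, which is open for $n\ge 3$; the symmetry hypothesis is essential and enters through the improved eigenvalue $\lambda_2=2n$ on even functions (versus $\lambda_1=n-1$ in the general case). A complete global argument would proceed either via a homotopy along the Minkowski interpolation $h_t=(1-t)+th$ from the ball to $\Omega$, monitoring both integrands in~($\ast$) together with a compactness/lower-semicontinuity argument securing existence of a minimizer of $F$, or via a Pr\'ekopa--Leindler-type inequality on $S^{n-1}$ restricted to even functions.
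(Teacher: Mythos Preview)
Your reduction to the scale-invariant inequality $(\ast)$ is correct and coincides with the paper's formula~(\ref{leb-log}); the Monge--Amp\`ere bookkeeping and the entropy cancellation are done the same way in Section~6. Your second-variation computation at $h\equiv 1$ is also correct (up to an inessential factor $\tfrac12$), and the use of the first even eigenvalue $\lambda_2=2n$ is the right mechanism by which symmetry enters.

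However, the proof is incomplete precisely where you say it is: the local maximum at $h\equiv 1$ does not yield the global bound, and neither of your suggested strategies (compactness of minimizers, or a spherical Pr\'ekopa--Leindler for even functions) is carried out. The paper closes this gap by a different, more concrete, device: it proves the \emph{trace} inequality
\[
\frac{1}{n-1}\int_{S^{n-1}} {\rm Tr}\,(D^2 h)^{-1}\,d\sigma \ \ge\ \int_{S^{n-1}} \frac{d\sigma}{h}
\]
for every even $C^2$ support function $h$ (this is (\ref{050518})). Once this is in hand, your homotopy $h_t=(1-t)+th$ works immediately: differentiating the left side of $(\ast)$ in $t$ gives exactly $(n-1)\int(h_t-1)/h_t\,d\sigma - \int {\rm Tr}(D^2 h_t)^{-1}(D^2 h_t - I)\,d\sigma$, and the trace inequality applied to $h_t$ shows this derivative is $\le 0$ for \emph{every} $t\in[0,1]$, not just at $t=0$. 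So the missing ingredient is not a variational or compactness argument, but the pointwise-in-$t$ estimate~(\ref{050518}).

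The proof of (\ref{050518}) itself is where the symmetry hypothesis is actually used globally. For $n>2$ the paper first shows, by H\"older and the arithmetic--geometric mean (Proposition~\ref{trfh} with $f=h$, $h=1$), that
\[
\frac{1}{n-1}\int {\rm Tr}(D^2 h)^{-1}\,d\sigma \ \ge\ \Bigl(\frac{|B|}{|\Omega_h|}\Bigr)^{\frac{1}{n-1}}\Bigl(\int \frac{d\sigma}{h}\Bigr)^{-\frac{1}{n-1}},
\]
and then closes the loop with $|B|/|\Omega_h|\ge(\int d\sigma/h)^n$, which follows from $\int h^{-n}d\sigma=|\Omega_h^\circ|/|B|$ and the Blaschke--Santal\'o inequality $|\Omega_h|\,|\Omega_h^\circ|\le |B|^2$. (For $n=2$ a separate argument via Gage's curvature inequality is used.) Thus the key global input replacing your eigenvalue estimate is Blaschke--Santal\'o, and this is exactly where evenness of $h$ is consumed; your claim that the non-symmetric version of $(\ast)$ is equivalent to the full log-Brunn--Minkowski conjecture is too strong---here $\mu=\sigma$ is fixed, and the symmetric case is settled unconditionally by the argument above.
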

   This result seems to be a natural generalization of  (\ref{Tal}) for the sphere.
   Other transportational and functional  inequalities on the sphere has been studied in \cite{BChG}, \cite{CE} (see also \cite{BGL}).
   We emphasize that the standard proofs of  transportation inequalities usually involve displacement convexity 
  arguments (or some equivalent constructions).
   
  The proof of this inequality follows the classical arguments with an additional ingredient: with the help of the Blaschke-Santal\'o
  inequality   we establish  the following estimate which compensates the lack of displacement convexity
(for a  more general statement see Proposition \ref{trfh})
\begin{equation}
\label{trhs}
\frac{1}{n-1}\int_{S^{n-1}} {\rm Tr}(D^2 h)^{-1} dx \ge \int_{S^{n-1}} \frac{dx}{h},
\end{equation}
where $h \in C^2(S^{n-1})$ is a symmetric support function of a convex body. The tightness of this inequality immediately implies uniqueness
for the log-Minkowski problem for the case $\mu = \sigma$, which was shown first by W.~Firey \cite{Firey} (see explanations in the last section).   
Finally, we conjecture the following inequality generalizing (\ref{trhs}):
\begin{equation}
\frac{1}{n-1}\int_{S^{n-1}} {\rm Tr}(D^2 f)^{-1} (D^2 h) d \mu \ge \int_{S^{n-1}} \frac{h}{f} d \mu,
\end{equation}
where $\mu = \frac{1}{n |\Omega_h|} h \det D^2 h  \cdot \mathcal{H}^{n-1}$ and  $h, f \in C^2(S^{n-1})$ are symmetric  support functions of convex bodies.
 This inequality implies uniqueness of a solution to the general even log-Minkowski problem, provided the equality case  holds if and only if $f=ch$.
 
 The author is grateful to Emanuel Milman for fruitful discussions.

  \section{Preliminaries: variation of the Euclidean Kantorovich distance}

 In our work we deal with optimal transportation on the sphere and related variational problems.
  Before we consider the spherical case let us briefly explain the relevant  Euclidean  technique.

  Let 
$
\nabla \Phi
$ 
be the optimal transportation of  $\mu = \rho dx = e^{-V} dx$  onto $\nu = e^{-W} dx$.
By the change of variables formula
$$
V = W(\nabla \Phi) - \log \det D^2 \Phi.
$$
We will calculate the variation of $W_2$.
The formula we get is a particular case of a well-known result  (\cite{Villani}, Theorem 8.3). We include it for completeness of the picture.

 Given a function $v$ with zero $\mu$-mean  $\int v \rho \ dx=0$ consider the  variation of  $\mu$:
 $$
 \rho_{\varepsilon} = ( 1 + \varepsilon v) \rho.
  $$ 
  Let  $\nabla \Phi_{\varepsilon}$ be the optimal transportation of  $\rho_{\varepsilon} dx$  onto  $\nu = e^{-W} dx$.
  One has 
  $$
  \Phi_{\varepsilon} = \Phi + \varepsilon u + o(\varepsilon),
  $$
  where 
  $$
V - \log ( 1 + \varepsilon v) = W(\nabla \Phi_{\varepsilon}) - \log \det (D^2 \Phi_{\varepsilon}).
  $$
  This relation immediately implies 
  \begin{equation}
  \label{L}
   Lu :=  {\rm{Tr}} [(D^2\Phi)^{-1} D^2 u]  -  \langle \nabla W(\nabla \Phi), \nabla u \rangle =v. 
  \end{equation}
  
  One can check by direct computations that $L$ is a generator of the Dirichlet form 
  on the metric-measure space $(D^2 \Phi, \mu)$ (see \cite{Kol}):
\begin{lemma}
  $$
  \int v f \rho dx = - \int \langle (D^2 \Phi)^{-1} \nabla u, \nabla f \rangle \rho dx.
  $$
\end{lemma}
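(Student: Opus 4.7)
The plan is to establish the identity by computing $\int (Lu)\, f\, \rho\, dx$ directly and using the Monge--Amp\`ere relation $V = W(\nabla \Phi) - \log\det D^2\Phi$ to eliminate the drift. Set $M = D^2\Phi$ and $A = M^{-1}$. Integration by parts applied coordinatewise to the Hessian term yields
$$
\int f\, {\rm Tr}(A\, D^2 u)\, e^{-V}\, dx = -\int \langle A \nabla u, \nabla f\rangle\, e^{-V}\, dx - \int f\, (\mathrm{div}\, A)\cdot \nabla u\; e^{-V}\, dx + \int f\, \langle A \nabla V, \nabla u\rangle\, e^{-V}\, dx,
$$
where $(\mathrm{div}\, A)_j := \sum_i \partial_i A_{ij}$. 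The first term on the right is already what the lemma demands; the remainder of the proof is to check that the sum of the other two contributions equals $\int f\, \langle \nabla W(\nabla \Phi), \nabla u\rangle\, e^{-V}\, dx$, so that this quantity will exactly cancel the drift $-\langle \nabla W(\nabla \Phi), \nabla u\rangle$ inside $L$.

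For the divergence of $A$, I would differentiate $MA = \mathrm{Id}$ to obtain $\partial_i A = -A (\partial_i M) A$, then exploit the symmetry $\partial_i M_{kl} = \partial_l M_{ik}$ of the third derivatives of $\Phi$ together with the Jacobi formula $\partial_l \log\det M = {\rm Tr}(A\, \partial_l M)$. A short index manipulation yields
$$
\mathrm{div}\, A = -A\, \nabla \log\det M.
$$
On the other hand, differentiating (\ref{mae}) gives $\nabla V = M\, \nabla W(\nabla \Phi) - \nabla \log\det M$, which rearranges to $\nabla W(\nabla \Phi) = A \nabla V + A \nabla \log\det M = A \nabla V - \mathrm{div}\, A$. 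Substituting this into the integration-by-parts formula above, the last two terms collapse to $\int f\, \langle \nabla W(\nabla \Phi), \nabla u\rangle\, e^{-V}\, dx$. This precisely cancels the contribution of the drift part of $L$, and since $v = Lu$ we are left with $\int v f \rho\, dx = -\int \langle A \nabla u, \nabla f\rangle \rho\, dx$, as claimed.

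The only genuinely delicate step is the matrix identity $\mathrm{div}\, A = -A\, \nabla \log\det M$, which depends essentially on commuting mixed third partials of $\Phi$; everything else is bookkeeping. Implicit in the argument is enough regularity (say, $\Phi \in C^3$, together with suitable integrability or decay of $e^{-V}$) to eliminate boundary contributions from the integration by parts, which is standard in the smooth log-concave setting considered here.
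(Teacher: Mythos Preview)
Your proof is correct, but it takes a genuinely different route from the paper's. The paper does not integrate by parts at all: it differentiates the pushforward identity
\[
\int g(\nabla \Phi_{\varepsilon})\,(1+\varepsilon v)\,\rho\,dx = \int g\,d\nu
\]
at $\varepsilon=0$ to obtain $\int g(\nabla\Phi)\,v\,\rho\,dx + \int \langle \nabla g(\nabla\Phi),\nabla u\rangle\,\rho\,dx = 0$, and then substitutes $f = g(\nabla\Phi)$, so that $\nabla g(\nabla\Phi) = (D^2\Phi)^{-1}\nabla f$. This is a two-line ``transport'' argument that never touches the Monge--Amp\`ere equation, the divergence of $(D^2\Phi)^{-1}$, or the Jacobi formula.

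Your approach is the direct PDE route: verify symmetry of $L$ against $\mu$ by integrating by parts and checking that all drift terms cancel via the matrix identity $\mathrm{div}\,A = -A\,\nabla\log\det M$ together with the differentiated Monge--Amp\`ere relation. This is heavier but more explicit, and it has the minor advantage of applying to an arbitrary test function $f$ without needing to invoke that $\nabla\Phi$ is a diffeomorphism (which the paper's argument implicitly uses when writing $f = g(\nabla\Phi)$). The paper's argument, on the other hand, makes the variational origin of the formula transparent and generalizes cleanly to the spherical setting treated later.
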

\begin{proof}
By the change of variables formula for any smooth function $g$
$$
\int g(\nabla \Phi_{\varepsilon})  ( 1 + \varepsilon v) \rho dx = \int g d \nu.
$$
Expanding in $\varepsilon$ at zero one gets
$$
\int g(\nabla \Phi )  v \rho dx + \int \langle \nabla g(\nabla \Phi), \nabla u \rangle  \rho dx =0. 
$$
Setting $f = g(\nabla \Phi )$ one gets the claim.
\end{proof}

\begin{proposition}
\label{020718}
Let $F(\rho_{\varepsilon}) = \int \langle x, \nabla \Phi_{\varepsilon} \rangle d \rho_{\varepsilon}$. One has
$$
    \frac{d}{ d \varepsilon} F(\rho_{\varepsilon})|_{\varepsilon=0} 
    =  \int \Phi v \rho  dx.
    $$
\end{proposition}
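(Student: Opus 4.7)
The plan is to differentiate $F(\rho_\varepsilon)$ directly in $\varepsilon$ and then absorb the awkward $\nabla u$-term using the preceding Lemma with a well-chosen test function. Writing $\Phi_\varepsilon = \Phi + \varepsilon u + o(\varepsilon)$ and $\rho_\varepsilon = (1+\varepsilon v)\rho$ and expanding inside the integral,
$$
F(\rho_\varepsilon) = \int \langle x,\, \nabla\Phi + \varepsilon \nabla u + o(\varepsilon)\rangle (1+\varepsilon v)\rho\, dx,
$$
so collecting the linear terms yields
$$
\frac{d}{d\varepsilon} F(\rho_\varepsilon)\Big|_{\varepsilon=0} = \int \langle x, \nabla u\rangle \rho\, dx + \int \langle x, \nabla\Phi\rangle v\rho\, dx.
$$
The second term is already in a form close to the target, so the task reduces to identifying $\int \langle x, \nabla u\rangle\rho\, dx$ with $\int (\Phi - \langle x, \nabla\Phi\rangle) v\rho\, dx$.

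The key observation is that the function $f(x) := \Phi(x) - \langle x, \nabla\Phi(x)\rangle$ (which is just $-\Phi^*(\nabla\Phi(x))$ by Legendre duality) has an unusually simple gradient: differentiating $\langle x, \nabla\Phi\rangle$ gives $\nabla\Phi + D^2\Phi\cdot x$, so $\nabla f = -D^2\Phi \cdot x$. Applying the Lemma with this $f$ produces
$$
\int v\big(\Phi - \langle x, \nabla\Phi\rangle\big)\rho\, dx = -\int \big\langle (D^2\Phi)^{-1}\nabla u,\, -D^2\Phi\cdot x\big\rangle \rho\, dx = \int \langle x, \nabla u\rangle\rho\, dx,
$$
where the $(D^2\Phi)^{-1}$ and $D^2\Phi$ cancel cleanly. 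Substituting back into the derivative from the previous paragraph, the two terms $\pm\int \langle x, \nabla\Phi\rangle v\rho\, dx$ cancel and leave exactly $\int \Phi v\rho\, dx$.

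There is no real analytical obstacle here beyond the regularity already implicit in the expansion $\Phi_\varepsilon = \Phi + \varepsilon u + o(\varepsilon)$ that was used in Section~2. The only genuinely inventive step is spotting the test function $f = \Phi - \langle x, \nabla\Phi\rangle$; a conceptual alternative that makes this choice natural is to rewrite $F$ via Kantorovich/Legendre duality as $F(\rho_\varepsilon) = \int \Phi_\varepsilon\, d\mu_\varepsilon + \int \Phi_\varepsilon^*\, d\nu$, differentiate the first term by the product rule (producing $\int u\rho\,dx + \int \Phi v\rho\,dx$), and differentiate the second by the envelope theorem (producing $-\int u \circ (\nabla\Phi)^{-1}\, d\nu = -\int u\rho\,dx$ after pushforward). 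The two $\int u\rho\,dx$ contributions cancel and the same conclusion follows.
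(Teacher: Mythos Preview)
Your proof is correct and follows essentially the same route as the paper: both differentiate $F$ directly to get $\int \langle x,\nabla u\rangle\rho\,dx + \int \langle x,\nabla\Phi\rangle v\rho\,dx$ and then use the Lemma to convert the result into $\int \Phi v\rho\,dx$. The only cosmetic difference is that the paper applies the Lemma twice (first with $f=\langle x,\nabla\Phi\rangle$, then with $f=\Phi$), whereas you combine these into a single application with the test function $f=\Phi-\langle x,\nabla\Phi\rangle$; by linearity the two computations are identical.
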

\begin{proof}
$$
    \frac{d}{ d \varepsilon} F(\rho_{\varepsilon})|_{\varepsilon=0} =  \frac{d}{ d \varepsilon}  \int \langle x, \nabla \Phi_{\varepsilon}  \rangle \rho_{\varepsilon}  dx|_{\varepsilon=0}
    = \int \langle x, \nabla u \rangle \rho dx + \int \langle x , \nabla \Phi \rangle v \rho dx.
 $$
By the previous Lemma
 \begin{align*}
  \int \langle x , \nabla \Phi \rangle v \rho dx & =
 - \int \langle (D^2 \Phi)^{-1} \nabla u, \nabla \bigl(  \langle x , \nabla \Phi \rangle \bigr) \rangle \rho dx
 = -\int \langle x, \nabla u \rangle \rho dx  \\& - \int \langle (D^2 \Phi)^{-1} \nabla u,  \nabla \Phi  \rangle \rho dx
  = -\int \langle x, \nabla u \rangle \rho dx  + \int \Phi v \rho  dx.
  \end{align*}
  The proof is complete.
  \end{proof}
  Proposition \ref{020718} can be used to compute the variation of  $$W_2^2(\mu,\nu) = \int (x - \nabla \Phi)^2 d \mu.$$
  \begin{corollary}\label{kdinf}
  The variation of the Kantorovich distance $W_2$ can be computed as follows:
  $$
    \frac{d}{ d \varepsilon} W_2^2(\rho_{\varepsilon},\nu)|_{\varepsilon=0}
    = \int (x^2 - 2 \Phi) v \rho dx = -2 \int \langle x - \nabla \Phi, (D^2 \Phi)^{-1} \nabla u \rangle d \mu.
  $$
  \end{corollary}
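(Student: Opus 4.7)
The plan is to reduce everything to the previous Proposition \ref{020718} together with the Lemma on the Dirichlet form, without any new differentiation.

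First I would expand the squared distance as
\[
W_2^2(\rho_{\varepsilon},\nu) = \int |x|^2 \rho_{\varepsilon}\, dx - 2\int \langle x, \nabla \Phi_{\varepsilon}\rangle \rho_{\varepsilon}\, dx + \int |\nabla \Phi_{\varepsilon}|^2 \rho_{\varepsilon}\, dx.
\]
The third term, by the pushforward property $(\nabla \Phi_{\varepsilon})_{\#}\mu_{\varepsilon} = \nu$, equals $\int |y|^2\, d\nu$, which is independent of $\varepsilon$ and therefore drops out. The first term differentiates at $\varepsilon = 0$ to $\int |x|^2 v \rho\, dx$, since only the density depends on $\varepsilon$ and $\partial_{\varepsilon}\rho_{\varepsilon}|_{\varepsilon=0} = v\rho$. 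For the middle term, I would apply Proposition \ref{020718} directly to obtain $-2 \int \Phi v \rho\, dx$. Summing gives the first equality
\[
\frac{d}{d\varepsilon} W_2^2(\rho_{\varepsilon},\nu)\Big|_{\varepsilon=0} = \int (|x|^2 - 2\Phi)\, v \rho\, dx.
\]

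To obtain the second equality I would invoke the Lemma preceding Proposition \ref{020718} with the test function $f = \tfrac{1}{2}|x|^2 - \Phi$, noting that $\nabla f = x - \nabla \Phi$, which yields
\[
\int v\bigl(\tfrac{1}{2}|x|^2 - \Phi\bigr)\rho\, dx = -\int \langle (D^2 \Phi)^{-1}\nabla u,\, x - \nabla \Phi\rangle\, d\mu.
\]
Multiplying by $2$ produces the second stated identity.

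There is essentially no obstacle here, since the corollary is a bookkeeping consequence of the two preceding results; the only mild point to justify is the constancy of $\int |\nabla \Phi_{\varepsilon}|^2 \rho_{\varepsilon}\, dx$ in $\varepsilon$, which follows immediately from the fact that $\nabla \Phi_{\varepsilon}$ pushes $\rho_{\varepsilon}dx$ onto $\nu$ for every $\varepsilon$. If one wanted to be more careful, a standard differentiability/regularity assumption on $\varepsilon \mapsto \Phi_{\varepsilon}$ would be invoked (as in \cite{Villani}, Chapter 8) to legitimize interchanging differentiation and integration.
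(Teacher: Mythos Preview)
Your proposal is correct and follows essentially the same route the paper intends: the corollary is stated immediately after Proposition~\ref{020718} with the remark that ``Proposition~\ref{020718} can be used to compute the variation of $W_2^2(\mu,\nu) = \int (x - \nabla \Phi)^2\, d\mu$,'' and your expansion of the square together with the pushforward observation for the $|\nabla\Phi_\varepsilon|^2$ term is exactly the computation this points to. Your derivation of the second equality by applying the Lemma with $f = \tfrac{1}{2}|x|^2 - \Phi$ is the natural one-line step; the paper leaves this implicit (and afterwards only remarks that the formula also matches the general velocity-field expression from \cite{Villani}, Theorem~8.3, with $\xi_0 = -(D^2\Phi)^{-1}\nabla u$).
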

  The above computation is a particular case of the general expression for derivative of
  $W_2$ on the space $\mathcal{P}_2$ (see \cite{Villani}, Theorem 8.3). According to this result
  $$
  \frac{d}{ d t} W_2^2(\rho_{t},\nu)|_{t=0}
  = 2 \int \langle x - \nabla \Phi, \xi_0 \rangle d \mu,
  $$ 
  where $\rho_t$ is a family of probability densities satisfying
  \begin{equation}
  \label{rhotxit}
  \frac{\partial \rho_t}{\partial t} + {\rm div} \bigl( \rho_t \cdot \xi_t \bigr) =0
  \end{equation}
for some given velocity field $\xi_t$. The reader can check that Corollary
\ref{kdinf} follows from these formulae  for a field $\xi_t$ with initial velocity $\xi_0 =  - (D^2 \Phi)^{-1} \nabla u$
and (\ref{rhotxit}) is equivalent to another representation of $v$:
$$
v = e^{V} {\rm div} \bigl( (D^2 \Phi)^{-1} \nabla u \cdot e^{-V}\bigr).
$$


  \section{Spherical logarithimic Kantorovich functional}

 {\bf Notations.} In what follows   $|\Omega|$ is the volume of a convex  body $\Omega \subset \mathbb{R}^n$,  $|\partial \Omega|$ is the $(n-1)$-dimensional 
  Hausdorff measure of the boundary  $\partial \Omega$ of $\Omega$,
  $B$ is the  unit ball in $\mathbb{R}^n$ with center at the origin and $ S^{n-1}$ is the boundary of $B$.
  The $(n-1)$-dimensional Hausdorff measure  is denoted by 
  $\mathcal{H}^{n-1}$, the normalized probability uniform measure on $S^{n-1}$
  is denoted by
  $\sigma$. Note that $${\sigma}  = \frac{\mathcal{H}^{n-1}|_{S^{n-1}}}{n|B|}.$$ 
 Finally,  the integral of a function $f$ over $\partial \Omega$ with respect to the $(n-1)$-dimensional 
  Hausdorff measure on the boundary  $\partial \Omega$
  will be denoted either by 
  $$\int_{\partial \Omega} f dx$$ 
  or by
  $$\int_{\partial \Omega} f d\mathcal{H}^{n-1}.$$
  Given a support function $h$,  the corresponding convex body will be denoted by $\Omega_h$.

  Everywhere in this section $\Omega$ is a compact convex body containing the origin.
  For any given couple of probability measures $\mu, \nu$ on $S^{n-1}$ we define the  Kantorovich functional 
   $$
 K(\mu,\nu) = \min_{\pi \in \Pi(\mu,\nu)} \int_{(S^{n-1})^2} c(x,y) d \pi,
 $$ 
where we work with the following cost function:
 $$
 c(x,y) = \left\{\begin{array}{cc}
             \log \frac{1}{\langle x, y \rangle}, & \langle x, y \rangle > 0\\
             + \infty, &  \langle x, y \rangle \le 0.
           \end{array}\right.
 $$ 
 This function naturally appears in convex geometry. This  can be explained 
 by the fact that the radial 
 $$
 r(y) = \sup \{t : t y \in \Omega, t >0\}, \  y \in S^{n-1}
 $$ 
 and the support function 
 $$
 h(x) = \sup_{z \in \Omega} \langle x, z \rangle, \ x \in S^{n-1}
 $$ of a convex body $\Omega$ are related by a  Legendre-type transform
  $$
  h(x) = \sup_{y \in S^{n-1}} r(y) \langle x, y \rangle,   
  $$
  $$
  \frac{1}{r(y)} = \sup_{x \in S^{n-1}} \frac{\langle x, y \rangle}{h(x)}
  $$
  (see \cite{Schneider}).
In particularly, $\log h$ and $\log r$ satisfy a Kantorovich-type duality relation
$$
\log r(y) - \log h(x) \le \log \frac{1}{\langle x, y \rangle}.
$$

Applying this observation,  V.~Oliker \cite{Oliker} has shown that the solution to this transportation problem
 solves a classical problem in convex geometry known as Aleksandrov problem.
 Other applications of the cost function  $c$ see in \cite{OlikerGangbo}, \cite{KKN}.

 {\bf Aleksandrov problem:}    Given a probability measure $\nu$ on $S^{n-1}$ 
    find a convex body $\Omega$ with $0 \in \Omega$ such that $\nu$ is the image of the uniform measure $\sigma$ of $S^{n-1}$
    under the mapping $$T(y) = n_{\partial \Omega} \circ r(y) y,$$ where 
    $$n_{\partial \Omega} \colon \partial \Omega \to S^{n-1}$$
    in the Gauss map.
    
    Aleksandrov gave in \cite{Aleks} the following  sufficient condition  for existence of a solution to this problem.  
 
 \begin{theorem} [\cite{Aleks}] Aleksandrov problem admits a unique solution provided $\nu$
 satisfies the following assumption: for every spherically convex set $A \subset S^{n-1}, A \ne S^{n-1}$
 \begin{equation}
 \label{Al-in}
 \nu(A) < \sigma(A_{\pi/2}),
 \end{equation}
 where $A_{\pi/2} = \{y: dist(A,y) < \frac{\pi}{2}\}$ and $dist$ is the standard distance on $S^{n-1}$.
 \end{theorem}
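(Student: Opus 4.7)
The plan is to derive existence from Kantorovich duality for the cost $c$ and to read off the body $\Omega$ from the dual potentials, exploiting the Legendre-type pairing $h(x)=\sup_{y}r(y)\langle x,y\rangle$, $1/r(y)=\sup_{x}\langle x,y\rangle/h(x)$ already highlighted in the preamble. The hypothesis (\ref{Al-in}) enters exactly at the feasibility step, to force the optimal plan away from the singular locus $\{\langle x,y\rangle\le 0\}$ of the cost.

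First I would address the finiteness of $K(\sigma,\nu)$, which is the main obstacle because $c=+\infty$ on $\{\langle x,y\rangle\le 0\}$. By a Strassen/Hall-type feasibility criterion for transport with $\{0,+\infty\}$-valued constraints, $K(\sigma,\nu)<\infty$ is equivalent to the existence of a coupling of $\sigma$ and $\nu$ concentrated on the open set $\{\langle x,y\rangle>0\}$. For closed spherically convex $A\subset S^{n-1}$ one checks that $\{x\in S^{n-1}:\langle x,y\rangle>0\text{ for some }y\in A\}=A_{\pi/2}$, so feasibility reads $\nu(A)\le\sigma(A_{\pi/2})$ for every such $A$, which is precisely (\ref{Al-in}). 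The strict form of the inequality will be used a second time to argue that the optimal plan $\pi$ is supported in $\{\langle x,y\rangle>0\}$ and avoids boundary concentration.

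Next I would invoke general Kantorovich duality for lower semicontinuous costs to produce an optimal plan $\pi$ together with $c$-conjugate potentials $(\phi,\psi)$ satisfying $\phi(x)+\psi(y)\le c(x,y)$ everywhere, with equality $\pi$-a.s. Setting $h(x):=e^{-\phi(x)}$ and $r(y):=e^{\psi(y)}$, the potential inequality turns into $r(y)\langle x,y\rangle\le h(x)$, and $c$-conjugacy gives the two sup-representations cited above; these exhibit $h$ and $r$ as the support and radial functions of a common convex body $\Omega\ni 0$. On $\operatorname{supp}(\pi)$ the equality $h(x)=r(y)\langle x,y\rangle$ says exactly that $y$ maximizes the defining sup of $1/r$, hence $x$ is the outer unit normal to $\partial\Omega$ at the boundary point $r(y)y$. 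Consequently $T(y)=n_{\partial\Omega}(r(y)y)$ pushes $\sigma$ forward to $\nu$, which is the desired solution.

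For uniqueness (up to the evident invariance $\Omega\mapsto\lambda\Omega$, which does not alter the Gauss map $T$), I would argue that two solutions $\Omega_1,\Omega_2$ both produce $c$-cyclically monotone transport plans between $\sigma$ and $\nu$; by the standard uniqueness of optimal plans for a strictly concave cost on $\{\langle x,y\rangle>0\}$, these plans must coincide, and the corresponding $c$-conjugate potentials are determined up to an additive constant, forcing $\log h_1-\log h_2$ to be constant. Alternatively one may invoke Aleksandrov's topological/variational argument, deforming $\Omega_1$ into $\Omega_2$ and extracting a contradiction from the strict form of (\ref{Al-in}) on a separating spherically convex cap. I expect the translation of (\ref{Al-in}) into the Strassen feasibility criterion, together with the boundary-avoidance of the optimizer, to be the genuinely delicate step; the passage from dual potentials to a convex body is then a formal consequence of the Legendre duality between support and radial functions.
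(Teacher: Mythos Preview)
The paper does not prove this theorem. It is stated with the attribution ``[\cite{Aleks}]'' as a classical result of Aleksandrov, and the surrounding text immediately adds that a \emph{transportational} proof was given by Oliker \cite{Oliker} and extended by Bertrand \cite{Bert}. There is therefore no ``paper's own proof'' to compare against; the paper treats the result as background.

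Your proposal is, in essence, a sketch of the Oliker--Bertrand approach that the paper alludes to: read the Aleksandrov problem as the Kantorovich problem for the cost $c(x,y)=\log(1/\langle x,y\rangle)$, use the Strassen/Hall feasibility criterion to convert (\ref{Al-in}) into finiteness of $K(\sigma,\nu)$ and into the statement that the optimal plan avoids the singular set $\{\langle x,y\rangle\le 0\}$, extract $c$-conjugate potentials, and exponentiate them to obtain the support and radial functions of a convex body. That outline is correct in spirit and is exactly the route the paper points to in the literature.

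Two points deserve tightening if you were to make this rigorous. First, the step from the strict inequality (\ref{Al-in}) to the existence of a coupling supported on $\{\langle x,y\rangle>0\}$ is not a direct application of Strassen's theorem, since the latter works with closed constraint sets; one has to argue via an exhaustion $\{\langle x,y\rangle\ge\varepsilon\}$ and use compactness, and this is where the \emph{strictness} of (\ref{Al-in}) is genuinely consumed (cf.\ the discussion in \cite{Bert}). Second, your uniqueness argument appeals to ``strict concavity'' of the cost, which is not the relevant structural property; what one actually uses is the twist condition (injectivity of $y\mapsto\nabla_{S^{n-1}}c(x,y)$ on the set $\{\langle x,y\rangle>0\}$), together with absolute continuity of $\sigma$, to conclude that the optimal plan is induced by a map and hence unique. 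The alternative you mention---Aleksandrov's own topological/variational argument---is of course the original 1942 proof and has nothing to do with optimal transport.
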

 
 A transportational solution to this problem was obtained by V.~Oliker in \cite{Oliker}. He also proved well-posedness
 of the dual problem and the absence of the  duality gap.
His result was generalized later by J.~Bertrand in \cite{Bert}.
 In particularly, Bertrand constructed a transportational solution for a couple of probability measures
 $\mu =  f \cdot \sigma, \nu$ under the generalized Alexandrov-type assumption:
 \begin{equation}
 \label{Al-in2}
 \nu(A) < \mu(A_{\pi/2})
 \end{equation}
 (see Remark 4.9 in \cite{Bert}).
 
 In what follows 
 $$
 \nabla_{S^{n-1}}, \ \nabla^2_{S^{n-1}}
 $$
 denote the spherical gradient and the spherical Hessian accordingly.
 
 We will also use the following operator acting on the tangent space $TS^{n-1}(x)$
 at the point $x$
 $$
 D^2 f(x) = f(x) \cdot {\rm  Id} + \nabla^2_{S^{n-1}} f(x),
 $$
 where $Id$ is the identical mapping on $TS^{n-1}(x)$.
 In particular
  $$
 D^2 h( n_{\partial \Omega} )  = II^{-1}_{\partial \Omega},
 $$
where  $II_{\partial \Omega}$ is the second fundamental form of $\partial \Omega$ and $h$ is the support functional of $\Omega$
(see \cite{Schneider}).
 
    The optimal transportation mapping corresponding to $c$  and pushing forward $\mu$ onto $\nu$ has the form
   \begin{equation}
   \label{TH}
  T(x) = \frac{h(x) \cdot x + \nabla_{S^{n-1}} h(x)}{\sqrt{h^2(x) + |\nabla_{S^{n-1}} h(x)|^2}}.
  \end{equation}
  
  Assuming smoothness of $h$ one can verify the following change of variables formula.
  \begin{lemma}
   Assume that $T$ pushes forward a probability measure $\rho_1 \cdot \sigma$ onto another probability measure $\rho_2 \cdot \sigma$. 
  Then  the following change of variables formula holds
  \begin{equation}
  \label{MA}
  \rho_1 = \rho_2(T) \frac{h \cdot \det D^2 h }{(h^2 + |\nabla_{S^{n-1}} h|^2)^{\frac{n}{2}}}.
  \end{equation}
  \end{lemma}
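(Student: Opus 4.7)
The plan is to compute the Jacobian of the map $T$ directly and then apply the standard change of variables formula $\rho_1(x) = \rho_2(T(x)) J_T(x)$, where $J_T$ is the Jacobian of $T \colon S^{n-1} \to S^{n-1}$ with respect to the surface measure. Since $\sigma$ is merely a fixed normalization of $\mathcal{H}^{n-1}|_{S^{n-1}}$, the normalizing constant cancels and the formula reduces to computing $J_T$.

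The most transparent way to carry this out is to factor $T$ through the boundary of $\Omega = \Omega_h$. Extending $h$ as a $1$-homogeneous function on $\mathbb{R}^n$ and using that $\nabla h(x) = h(x)x + \nabla_{S^{n-1}}h(x)$, one has $\nabla h(x) \in \partial\Omega$ and this point has outer unit normal $x$; in other words, $\nabla h = n_{\partial\Omega}^{-1}$. Then $T$ decomposes as
\begin{equation*}
S^{n-1} \xrightarrow{\;\Psi\;} \partial\Omega \xrightarrow{\;\pi\;} S^{n-1},
\qquad \Psi(x) = \nabla h(x), \quad \pi(z) = z/|z|,
\end{equation*}
so that $J_T = J_\Psi \cdot J_\pi(\Psi)$. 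For the first factor, the classical identity $D^2h(x) = II^{-1}_{\partial\Omega}$ already recalled in the paper gives $J_\Psi(x) = \det D^2h(x)$. For the second factor, a small patch on $\partial\Omega$ around $z$ subtends on $S^{n-1}$ (as seen from the origin) the solid angle element $\langle z, n_{\partial\Omega}(z)\rangle |z|^{-n}\, d\mathcal{H}^{n-1}$, so $J_\pi(z) = \langle z, n_{\partial\Omega}(z)\rangle/|z|^n$.

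Substituting $z = \Psi(x) = \nabla h(x)$, using Euler's identity $\langle \nabla h(x), x\rangle = h(x)$ and the computation $|\nabla h(x)|^2 = h^2(x) + |\nabla_{S^{n-1}}h(x)|^2$, I obtain
\begin{equation*}
J_T(x) \;=\; \det D^2 h(x)\cdot \frac{h(x)}{\bigl(h^2(x) + |\nabla_{S^{n-1}}h(x)|^2\bigr)^{n/2}},
\end{equation*}
which is exactly the Jacobian appearing on the right of (\ref{MA}). The change of variables $T_\# (\rho_1\,\sigma) = \rho_2 \,\sigma$ then yields the claim pointwise.

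The only delicate point is the identification $J_\Psi = \det D^2h$ (equivalently, $D^2h(x) = II^{-1}_{\partial\Omega}(\nabla h(x))$): it requires differentiating the homogeneous extension of $h$ and checking that the spherical Hessian plus $h\,\mathrm{Id}$ represents precisely the differential of the Gauss map inverse restricted to tangent spaces. This is the standard fact from convex geometry cited from \cite{Schneider}, so assuming smoothness of $h$ (and hence strict convexity of $\Omega$) one may appeal to it directly; otherwise one verifies it by a coordinate computation on $S^{n-1}$, which is routine but slightly tedious.
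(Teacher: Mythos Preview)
The paper does not actually supply a proof of this lemma: it is stated as a fact one can verify, with a reference to Schneider for the identity $D^2 h = II^{-1}_{\partial\Omega}$ and the subsequent remark $r(T) = \sqrt{h^2 + |\nabla_{S^{n-1}} h|^2}$ serving as hints. Your argument is a correct and natural way to fill in the details. The factorization $T = \pi \circ \Psi$ through $\partial\Omega_h$, with $\Psi = \nabla h$ the inverse Gauss map and $\pi$ the radial projection, cleanly isolates the two contributions to the Jacobian; the identifications $J_\Psi = \det D^2 h$ and $J_\pi(z) = \langle z, n_{\partial\Omega}(z)\rangle / |z|^n$ are standard, and your substitution $z = \nabla h(x)$, $n_{\partial\Omega}(z) = x$, $\langle \nabla h, x\rangle = h$, $|\nabla h|^2 = h^2 + |\nabla_{S^{n-1}} h|^2$ is exactly right. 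There is nothing to correct.
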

  
  It is important to have in mind the following relation between $r$ and $h$:
  $$
  r(T) = \sqrt{h^2 +  |\nabla_{S^{n-1}} h|^2}
  $$
    (see \cite{Schneider}). In particularly, (\ref{MA}) reads also as
   \begin{equation}
  \label{MA1}
  \rho_1 =  \frac{\rho_2(T)}{r^n(T)} {h \cdot \det D^2 h }.
  \end{equation}

  The inverse mapping $S = T^{-1}$ has the form (see \cite{Oliker})
  $$
  S(y) = \frac{-\nabla_{S^{n-1}} r(y) + r(y) y}{\sqrt{|\nabla_{S^{n-1}} r(y)|^2 + r^2(y)}}.
  $$
  To see that $S$ satisfy the same equation (\ref{TH}) for an appropriate choice
  of the potential, one needs to pass to  the inverse radial function
  \begin{equation}
  \label{fr}
  f(y) = \frac{1}{r(y)}.
  \end{equation}
  One has
  $$
  S(y) = \frac{f(y) y + \nabla_{S^{n-1}} f(y)}{\sqrt{|\nabla_{S^{n-1}} f(y)|^2 + f^2(y)}}.
  $$
  
  In our work we will mainly  concentrate on the symmetric case, meaning that
  $\mu$ and $\nu$ are invariant with respect to $x \to -x$. This assumption implies that the body $\Omega$
  is symmetric and $h$ is even.
  In the symmetric case the Aleksandrov sufficient condition  is automatically satisfied except of some degenerate situations.
  
  \begin{lemma}
   \label{transport-symmetric}
  Assume that $\mu$ and $\nu$ are symmetric, $\mu = f \cdot \sigma$, $f$ is positive $\sigma$-a.e., and $\nu(S^{n-1} \cap L) = 0$ for every hyperplane $L$
  passing through the origin. Then   (\ref{Al-in2}) is satisfied and there exists a unique solution
  to the Aleksandrov problem.
  \end{lemma}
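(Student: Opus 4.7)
The idea is to verify the Bertrand--Aleksandrov condition (\ref{Al-in2}), i.e.\ $\nu(A) < \mu(A_{\pi/2})$ for every spherically convex $A \subsetneq S^{n-1}$, and then invoke Bertrand's theorem (\cite{Bert}, Remark~4.9) to obtain both existence and uniqueness of a solution to the Aleksandrov problem. Writing $A = C \cap S^{n-1}$, where $C$ is the (proper) convex cone generated by $A$, I would split into two cases according to whether $C$ is full-dimensional.

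If $\operatorname{span} C \ne \mathbb R^n$, then $A$ is contained in some great subsphere sitting inside a hyperplane through the origin, so the hypothesis on $\nu$ gives $\nu(A) = 0$, whereas $A_{\pi/2}$ is a nonempty open set, and positivity of $f$ delivers $\mu(A_{\pi/2}) > 0$. This handles the degenerate case immediately.

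The substantive case is $\operatorname{span} C = \mathbb R^n$, where the main obstacle is to establish the two \emph{separate} bounds $\nu(A) \le 1/2$ and $\mu(A_{\pi/2}) > 1/2$. For the first, $A \cap (-A) = (C \cap (-C)) \cap S^{n-1}$ sits inside the lineality subspace $L := C \cap (-C)$, which is a proper linear subspace of $\mathbb R^n$ because $C \ne \mathbb R^n$; consequently $A \cap (-A)$ lies in a great hyperplane of $S^{n-1}$, so $\nu(A \cap (-A)) = 0$, and by symmetry of $\nu$ and inclusion--exclusion
\begin{equation*}
2\nu(A) = \nu(A) + \nu(-A) = \nu(A \cup (-A)) + \nu(A \cap (-A)) \le 1.
\end{equation*}
For the second bound I would compute
\begin{equation*}
S^{n-1} \setminus \bigl( A_{\pi/2} \cup (-A)_{\pi/2} \bigr) = \bigl\{ y \in S^{n-1} : \langle x, y \rangle = 0 \text{ for all } x \in A \bigr\} = (\operatorname{span} A)^\perp \cap S^{n-1} = \emptyset,
\end{equation*}
so $A_{\pi/2} \cup (-A)_{\pi/2} = S^{n-1}$. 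Since both sets are open and $S^{n-1}$ is connected (for $n \ge 2$), their intersection is a nonempty open set, of strictly positive $\sigma$-measure, hence via $f > 0$ a.e.\ of strictly positive $\mu$-measure. Symmetry of $\mu$ then produces $2\mu(A_{\pi/2}) = 1 + \mu(A_{\pi/2} \cap (-A)_{\pi/2}) > 1$.

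The delicate point is the last strict inequality: it requires connectedness of $S^{n-1}$ to force the two open hemispherical neighborhoods to overlap, together with positivity of $f$ to convert topological overlap into a genuine mass overlap. The symmetry of both measures and the hypothesis that $\nu$ charges no great hyperplane are exactly what kill the antipodal overlap $A \cap (-A)$ on the $\nu$-side. Once (\ref{Al-in2}) is verified, Bertrand's theorem closes both existence and uniqueness.
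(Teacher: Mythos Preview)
Your proof is correct and follows the same overall strategy as the paper: establish the two separate bounds $\nu(A)\le 1/2$ and $\mu(A_{\pi/2})>1/2$ via the symmetry of the measures, then conclude by Bertrand's result. The implementations differ in detail, however. The paper splits according to whether $\nu(A)=0$ and, in the nontrivial case, encloses $A$ in a closed hemisphere $\{l\ge 0\}$; stripping off the equator $\{l=0\}$ (which is $\nu$-null) makes $A'$ and $-A'$ disjoint, giving $\nu(A)\le 1/2$. For the $\mu$-side the paper picks two distinct points $a,b\in A$ and observes that $\{a,b\}_{\pi/2}$ is the union of two \emph{distinct} open hemispheres, which by symmetry and positivity of $f$ has $\mu$-measure strictly above $1/2$.

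Your case split by $\dim(\operatorname{span} A)$ and your use of the lineality space $C\cap(-C)$ to kill $\nu(A\cap(-A))$ is a clean alternative to the hemisphere/equator trick, and your connectedness argument for $\mu(A_{\pi/2})>1/2$ is more structural than the paper's two-point device: it shows directly that $A_{\pi/2}$ and $-A_{\pi/2}$ form an open cover of $S^{n-1}$ whenever $A$ spans, so they must overlap on a set of positive measure. Both arguments ultimately exploit the same two facts (symmetry plus ``$A$ and $-A$ barely overlap, while $A_{\pi/2}$ and $-A_{\pi/2}$ overlap substantially''), but yours packages them more systematically and avoids the slightly ad hoc choice of two points.
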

  \begin{proof}
  Let us check (\ref{Al-in2}).
  For every spherically convex subset $A \ne S^{n-1}$ one can find a hemisphere $S_{l} = S^{n-1} \cap \{l \ge 0\}$, where $l$
  is a linear functional such that $A \subset S^{l}$. The case  $\nu(A)=0$ is obvious, so one can assume $\nu(A)>0$. Since $\nu(S^{n-1} \cap \{l = 0\})=0$, one has $\nu(A) = \nu(A')$, where $A' = A \setminus \{l=0\}$.
  By the symmetry assumption one gets that $\nu(A') = \nu(-A')$. Since $A' \cap -A' = \emptyset$, one gets $\nu(A) = \nu(A') \le 1/2$. Next we note that  $\nu(A)>0$, hence one can find a two-points set $M = \{a,b\} \subset A$
  but the set $M_{\pi/2}$ is a union of two distinct hemispheres. Since $\mu$ is symmetric and admits positive density, one immediately gets
  $\mu(A_{\pi/2}) \ge \mu(M_{\pi/2})> 1/2 \ge \nu(A)$. 
  \end{proof}
  
  \begin{remark}
  \label{1606}
  Unlike the Aleksandrov problem, a solution to the Monge-Kantorovich problem for $c$ always exists, because the cost function $c$ is lower semicontinuous.
  It may happen that the dual solution $(h, r)$ does not define any compact convex body $\Omega$ and the total cost function
  equals $+\infty$. For instance, consider $n=3$, $\mu$ is the symmetric measure which gives value $1/2$ to every pole and $\nu$ is concentrated on 
  the equator. Clearly, in this case any transportation plan $\Pi$ is supported on the set $\{ c= +\infty\}$. 
  
  If the Aleksandrov problem admits a solution and  $\mu = f \cdot \sigma$, then  $c \in L^{\infty}(\Pi)$
  for the corresponding optimal transport plan $\Pi$ (see \cite{Bert}).
  \end{remark}
   
   \section{Variation of the spherical  log-functional}
   
   Let $$\mu = \rho_{\mu} \cdot \sigma= e^{-V} \cdot \sigma$$ and
  $$\nu =  \rho_{\nu} \cdot \sigma = e^{-W} \cdot \sigma$$  be probability measures on $S^{n-1}$ and $T$ be the optimal transportation mapping for the cost function $c(x,y)$.
  It will be assumed that  $\rho_{\mu}, \rho_{\nu}$ and $T$ are sufficiently smooth.  Moreover, we assume that $h$ satisfies 
  $$
  D^2 h(x) >0, \  \ \forall x \in S^{n-1}.
  $$
  
  For the fixed target measure $\nu$ let us consider the variation of the source measure
  $$
  \mu_{\varepsilon} =  (1 + \varepsilon v) \mu, \ \int v d \mu =0. 
  $$
  It corresponds to the following variation of the Kantorovich potential :
  $$
  h_{\varepsilon} = h + \varepsilon u + o(\varepsilon).
  $$
 The variation 
  of the mapping $T$ looks as follows : 
  $$
  T_{\varepsilon} = T + \frac{\varepsilon}{\sqrt{h^2 + |\nabla_{S^{n-1}} h|^2}}  {\rm Pr}_{TS^{n-1}_{T(x)}} (ux + \nabla_{S^{n-1}} u) + o(\varepsilon).
  $$
  Here 
  $$
  {\rm Pr}_{TS^{n-1}_{y}}
  $$
  is the projection of $\mathbb{R}^{n}$ onto the tangent space of $S^{n-1}$ at the point $y$.
  
  By the change of variables formula (\ref{MA}):
  $$
  (1 + \varepsilon v) \rho_{\mu} = \rho_{\nu}(T_{\varepsilon})   \frac{h_{\varepsilon} \cdot \det D^2 h_{\varepsilon} }{(h_{\varepsilon}^2 + |\nabla_{S^{n-1}} h_{\varepsilon}|^2)^{\frac{n}{2}}}.
  $$ 
  Performing the Taylor expansion of the right-hand side and applying, in particular, the relation 
  $$\det(A+ \varepsilon B) = 
  \det A \cdot ( 1 +  \varepsilon {\rm Tr [A^{-1} B]} + o(\varepsilon)) 
  $$ one obtains
\begin{equation}
\label{vlu}
  v = {\rm Tr}  (D^2 h)^{-1} D^2 u - \Big\langle \nabla_{S^{n-1}} W(T) + n T,
   \frac{ux + \nabla_{S^{n-1}} u}{\sqrt{h^2 + |\nabla_{S^{n-1}} h|^2}} \Big\rangle + \frac{u}{h}.
   \end{equation}
  Let us denote the right-hand side of (\ref{vlu}) by $$L_{\mu,\nu} \Bigl( \frac{u}{h}\Bigr).$$
  This operator is the spherical analog of the Euclidean operator $L$ (\ref{L}).
  As in the Euclidean case $L_{\mu,\nu}$ is assciated with a Dirichlet form on $S^{n-1}$.  
  
  \begin{theorem}
  \label{ibp}
$L_{\mu,\nu}$  generates the following symmetric Dirichlet form:
   $$
 \int_{S^{n-1}}  h { \Big\langle  \nabla_{S^{n-1}} g,  
   ( D^2 h)^{-1} \nabla_{S^{n-1}} \Bigl( \frac{u}{h} \Bigr) \Big\rangle} d \mu  = -  \int_{S^{n-1}} g  L_{\mu,\nu} \Bigl( \frac{u}{h}\Bigr) d \mu,
 $$
     \end{theorem}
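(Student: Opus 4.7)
The plan is to mimic the Euclidean calculation that precedes Proposition \ref{020718}. Starting from the change-of-variables identity
$$
\int_{S^{n-1}} \phi(T_\varepsilon)(1+\varepsilon v)\,d\mu = \int_{S^{n-1}} \phi\,d\nu,
$$
valid for every smooth test function $\phi$, I differentiate at $\varepsilon = 0$. Using $v = L_{\mu,\nu}(u/h)$ from (\ref{vlu}) and the expression for $\delta T$ given in the previous section, this produces
$$
\int_{S^{n-1}} g\,L_{\mu,\nu}(u/h)\,d\mu = -\int_{S^{n-1}}\langle\nabla_{S^{n-1}}\phi(T),\delta T\rangle\,d\mu,
$$
where $g = \phi\circ T$. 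Since $T$ is a smooth diffeomorphism, every smooth $g$ arises this way, so the theorem is reduced to the pointwise identity
$$
\langle \nabla_{S^{n-1}}\phi(T),\delta T\rangle = h\langle \nabla_{S^{n-1}} g,(D^2 h)^{-1}\nabla_{S^{n-1}}(u/h)\rangle.
$$

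The core computation is the differential $dT$. Extending $h$ to be $1$-homogeneous on $\mathbb{R}^n$, the vector field $x \mapsto \nabla \bar h(x) = h(x)x + \nabla_{S^{n-1}} h(x)$ coincides with $r\,T$ where $r = \sqrt{h^2 + |\nabla_{S^{n-1}} h|^2}$; its Euclidean Hessian annihilates the radial direction by Euler's relation and restricts to $D^2 h = h\cdot\mathrm{Id} + \nabla^2_{S^{n-1}} h$ on $T_xS^{n-1}$. Differentiating $rT$ in a tangent direction $e \in T_xS^{n-1}$ and projecting onto $T_{T(x)}S^{n-1} = T^\perp$ yields
$$
r\cdot dT(e) = \mathrm{Pr}_{T^\perp}(D^2 h \cdot e).
$$
Dualising and using the symmetry of $D^2 h$ on $T_xS^{n-1}$ gives $(D^2 h)^{-1}\nabla_{S^{n-1}} g = r^{-1}\,\mathrm{Pr}_{T_xS^{n-1}}\nabla_{S^{n-1}}\phi(T)$.

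The argument is then closed by the algebraic identity
$$
h\,\nabla_{S^{n-1}}(u/h) = ux + \nabla_{S^{n-1}} u - \frac{u}{h}\,rT,
$$
an immediate consequence of $rT = hx + \nabla_{S^{n-1}} h$. Pairing with $\nabla_{S^{n-1}}\phi(T)$, the last term vanishes by the orthogonality $\nabla_{S^{n-1}}\phi(T)\perp T$, while the projections drop out because $\nabla_{S^{n-1}}(u/h) \in T_xS^{n-1}$ and $\nabla_{S^{n-1}}\phi(T) \in T^\perp$ already. This produces the required pointwise equality; symmetry of the resulting Dirichlet form in $g$ and $u/h$ is automatic from the symmetry of $(D^2 h)^{-1}$.

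I expect the main obstacle to be a clean derivation of $r\,dT(e) = \mathrm{Pr}_{T^\perp}(D^2 h\cdot e)$, which requires careful bookkeeping of the two distinct tangent spaces $T_xS^{n-1}$ and $T_{T(x)}S^{n-1}$ together with the identification of $D^2\bar h|_{T_xS^{n-1}}$ with $D^2 h$ via the Euler relation $D^2\bar h\cdot x = 0$. All remaining steps are Taylor expansions and orthogonality checks.
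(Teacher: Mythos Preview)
Your proposal is correct and follows essentially the same route as the paper: differentiate the push-forward identity $\int \phi(T_\varepsilon)\,d\mu_\varepsilon = \int \phi\,d\nu$ at $\varepsilon=0$, use the formula $r\,dT = \mathrm{Pr}_{T^\perp}\circ D^2 h$, and then identify the resulting expression with $h\langle \nabla g,(D^2h)^{-1}\nabla(u/h)\rangle$. The only organizational difference is in this last identification: the paper splits $ux+\nabla_{S^{n-1}}u$ into two pieces and computes $[\mathrm{Pr}_{T^\perp}D^2h]^{-1}\mathrm{Pr}_{T^\perp}$ on each separately (the $ux$ piece requiring a short ad hoc argument), whereas your identity $h\nabla_{S^{n-1}}(u/h)=ux+\nabla_{S^{n-1}}u-\tfrac{u}{h}rT$ together with $\nabla_{S^{n-1}}\phi(T)\perp T$ handles both at once and is a bit cleaner.
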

     
  \begin{proof}
  Expanding the left-hand side of the change of variables formula in $\varepsilon$
  $$
  \int_{S^{n-1}} f(T_\varepsilon)  \ d \mu_{\varepsilon}  = \int_{S^{n-1}} f  d \nu
  $$
 one obtains
 $$
   \int_{S^{n-1}} f(T)  v d \mu + \int_{S^{n-1}}  \frac{ \langle \nabla_{S^{n-1}} f \circ T, {\rm Pr}_{TS^{n-1}_{T(x)}} (ux + \nabla_{S^{n-1}} u)   \rangle}{\sqrt{h^2 + |\nabla_{S^{n-1}} h|^2}} d \mu = 0.
 $$
 Set: 
 $$
 g = f(T).
 $$
 One has $\nabla_{S^{n-1}} g = (DT)^* \nabla_{S^{n-1}} f  \circ T$, hence
 $$
 \nabla_{S^{n-1}} f \circ T = [(DT)^* ]^{-1} \nabla_{S^{n-1}} g. 
 $$
 Consequently
 $$
   \int_{S^{n-1}} g  v d\mu + \int_{S^{n-1}}  \frac{ \langle  \nabla_{S^{n-1}} g,  (DT)^{-1}  {\rm Pr}_{TS^{n-1}_{T(x)}} (ux + \nabla_{S^{n-1}} u)   \rangle}{\sqrt{h^2 + |\nabla_{S^{n-1}} h|^2}} d \mu = 0.
 $$
 It is easy to verify that
  $$
  DT|_{TS^{n-1}_{x}} = \frac{1}{\sqrt{h^2 + |\nabla_{S^{n-1}} h|^2}} {\rm Pr}_{TS^{n-1}_{T(x)}} D^2 h.
 $$
 This formula implies
 \begin{align*}
 & \frac{ \big\langle  \nabla_{S^{n-1}} g,  (DT)^{-1}  {\rm Pr}_{TS^{n-1}_{T(x)}} (ux + \nabla_{S^{n-1}} u)   \big\rangle}{\sqrt{h^2 + |\nabla_{S^{n-1}} h|^2}} 
\\&  =
  \big\langle  \nabla_{S^{n-1}} g,  \bigl[ {\rm Pr}_{TS^{n-1}_{T(x)}} D^2 h \bigr]^{-1}  {\rm Pr}_{TS^{n-1}_{T(x)}} (ux + \nabla_{S^{n-1}} u)   \big\rangle
 \end{align*}
 We need to compute
 $$
 v_1 = \bigl[ {\rm Pr}_{TS^{n-1}_{T(x)}}  D^2 h \bigr]^{-1}  {\rm Pr}_{TS^{n-1}_{T(x)}} (u x),
 $$
 $$
 v_2 = \bigl[ {\rm Pr}_{TS^{n-1}_{T(x)}}  D^2 h  \bigr]^{-1}  {\rm Pr}_{TS^{n-1}_{T(x)}} \bigl( \nabla_{S^{n-1}} u \bigr).
 $$
 Since $\nabla_{S^{n-1}} u \in TS^{n-1}_{x}$, it is easy to check that
 $$
 v_2 =  \bigl(  D^2 h   \bigr)^{-1}  \nabla_{S^{n-1}} u.
 $$
Let us compute $ v_1 =  u\bigl[ {\rm Pr}_{TS^{n-1}_{T(x)}}  D^2 h  \bigr]^{-1}  {\rm Pr}_{TS^{n-1}_{T(x)}} x $. To this end we note that
$v_1$ is the unique vector from $TS_x$ such that $\omega = \frac{1}{u} \cdot D^2 h  \cdot v_1$
belongs to $TS_x$ and satisfies
$$
\omega = x + \alpha T
$$
for some $\alpha \in \mathbb{R}$. The condition $\langle T(x), \omega \rangle =0$ implies
$$
\omega = x- \frac{T(x)}{\langle x, T(x) \rangle} = - \frac{\nabla_{S^{n-1}} h}{h}
$$
and
$$
v_1 = - u  (D^2 h)^{-1} \frac{\nabla_{S^{n-1}} h}{h}.
$$
Finally
$$
   \int_{S^{n-1}} g  v d \mu + \int_{S^{n-1}}  { \big\langle  \nabla_{S^{n-1}} g,  
    ( D^2 h )^{-1}  \Bigl( \nabla_{S^{n-1}} u  - u \frac{\nabla_{S^{n-1}} h}{h}  \Bigr) \big\rangle} d \mu = 0.
 $$
 Equivalently
 $$
   \int_{S^{n-1}} g  v d \mu  = - \int_{S^{n-1}}  h { \big\langle  \nabla_{S^{n-1}} g,  
    \bigl(  D^2 h  \bigr)^{-1} \nabla_{S^{n-1}} \Bigl( \frac{u}{h} \Bigr) \big\rangle} d \mu.
 $$
 The proof is complete.
 \end{proof}
 
 Thus we obtain that $L_{\mu,\nu}$
 is the generator of 
 $$
 \mathcal{E}_{\mu,\nu} (f,g) = \int_{S^{n-1}}  h { \Big\langle    
   ( D^2 h)^{-1} \nabla_{S^{n-1}} f,  \nabla_{S^{n-1}} g\Big\rangle} d \mu.
 $$
 The quadratic form
 $$
 g_h = \frac{D^2 h}{h} = {Id} + \frac{\nabla_{S^{n-1}}^2 h}{h}
 $$
 is a Riemannian metric on $S^{n-1}$ naturally associated with the couple $(\mu, \nu)$. For this metric $\mathcal{E}_{\mu,\nu}$
 is the standard weighted energy form:
 $$
 \mathcal{E}_{\mu,\nu} (f) = \int \|\nabla_{{g_h}} f\|^2_{g_h} d \mu
 $$
 and $L_{\mu,\nu}$ is the weighted Laplacian.
 
 \begin{lemma}
 The variation $$\delta_v 
 K(\mu,\nu)  =
 \lim_{\varepsilon \to 0} \frac{K(\mu_{\varepsilon},\nu) - K(\mu,\nu)}{\varepsilon},
 $$
 where 
 $
  \mu_{\varepsilon} =  (1 + \varepsilon v) \mu, \ \int_{S^{n-1}} v d \mu =0,
  $
 of the functional
 $
 K(\mu,\nu)
 $
 satisfies
 $$
 \delta_v  K(\mu,\nu)  = - \int_{S^{n-1}} \log h  \cdot v  d \mu.
 $$
 \end{lemma}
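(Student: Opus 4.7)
The cleanest approach is to represent $K(\cdot,\nu)$ as a supremum of affine-in-$\mu$ functionals via Kantorovich duality and then apply an envelope argument; this bypasses differentiating the transport map $T_\varepsilon$ directly. Kantorovich duality reads
$$K(\mu,\nu)=\sup_{a(x)+b(y)\le c(x,y)}\left[\int_{S^{n-1}}a\,d\mu+\int_{S^{n-1}}b\,d\nu\right],$$
and by the Oliker--Bertrand result (combined with the inequality $\log r(y)-\log h(x)\le c(x,y)$ recorded earlier in the section) the optimizer is $(a,b)=(-\log h,\log r)$, giving
$$K(\mu,\nu)=-\int_{S^{n-1}}\log h\,d\mu+\int_{S^{n-1}}\log r\,d\nu.$$

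For the envelope step, note that the feasibility constraint $a+b\le c$ does not involve $\mu$. Plugging the optimizer $(-\log h,\log r)$ of $(\mu,\nu)$ into the dual functional for $(\mu_\varepsilon,\nu)$ yields, using $\int v\,d\mu=0$,
$$K(\mu_\varepsilon,\nu)\ge -\int\log h\,d\mu_\varepsilon+\int\log r\,d\nu=K(\mu,\nu)-\varepsilon\int\log h\cdot v\,d\mu.$$
Symmetrically, plugging $(-\log h_\varepsilon,\log r_\varepsilon)$ into the dual for $(\mu,\nu)$ and using the identity $\int\log h_\varepsilon\,d\mu_\varepsilon=\int\log h_\varepsilon\,d\mu+\varepsilon\int\log h_\varepsilon\cdot v\,d\mu$ produces
$$K(\mu,\nu)\ge K(\mu_\varepsilon,\nu)+\varepsilon\int\log h_\varepsilon\cdot v\,d\mu.$$
Combining and dividing by $\varepsilon>0$ sandwiches the difference quotient:
$$-\int\log h\cdot v\,d\mu\le\frac{K(\mu_\varepsilon,\nu)-K(\mu,\nu)}{\varepsilon}\le -\int\log h_\varepsilon\cdot v\,d\mu.$$

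Sending $\varepsilon\to 0$ and repeating with $\varepsilon<0$ yields the claimed identity, provided $\log h_\varepsilon\to\log h$ in a sense sufficient to pass to the limit on the right-hand side against the bounded density $\rho_\mu$. This is the only technical obstacle; under the standing smoothness and nondegeneracy assumptions of the section ($\rho_\mu,\rho_\nu,T$ smooth and $D^2 h>0$), the expansion $h_\varepsilon=h+\varepsilon u+o(\varepsilon)$ derived from the Monge--Amp\`ere equation (\ref{MA}) already furnishes uniform convergence $h_\varepsilon\to h$ on $S^{n-1}$, and hence of $\log h_\varepsilon$ as well (since $h$ is bounded away from zero on the compact sphere). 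Dominated convergence then closes the sandwich and delivers $\delta_v K(\mu,\nu)=-\int_{S^{n-1}}\log h\cdot v\,d\mu$.
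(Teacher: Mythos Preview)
Your proof is correct and takes a genuinely different route from the paper's.

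The paper differentiates the \emph{primal} representation
\[
K(\mu,\nu)=\int_{S^{n-1}}\log\frac{\sqrt{h^2+|\nabla_{S^{n-1}}h|^2}}{h}\,d\mu
\]
directly in $\varepsilon$, producing four terms, and then invokes the integration-by-parts identity of Theorem~\ref{ibp} (the Dirichlet form associated with $L_{\mu,\nu}$) to cancel three of them. In other words, the paper's proof relies on the machinery of $L_{\mu,\nu}$ developed earlier in Section~4.

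Your argument instead works with the \emph{dual} representation $K(\mu,\nu)=-\int\log h\,d\mu+\int\log r\,d\nu$ (which the paper also records, but only later, in Section~6) and exploits a clean envelope principle: the dual feasibility constraint is independent of $\mu$, so optimal pairs for nearby problems are admissible competitors for each other. This sandwiches the difference quotient between $-\int\log h\cdot v\,d\mu$ and $-\int\log h_\varepsilon\cdot v\,d\mu$, and the standing regularity ($D^2h>0$, $h_\varepsilon=h+\varepsilon u+o(\varepsilon)$) closes it. Your approach is shorter, does not touch the operator $L_{\mu,\nu}$ or Theorem~\ref{ibp}, and makes the result transparent: only the $\mu$-marginal term of the duality varies, and by the envelope theorem the dependence of $(h,r)$ on $\varepsilon$ contributes nothing to first order. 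The paper's computation, on the other hand, has the merit of exercising and illustrating the Dirichlet-form calculus that is central to the rest of the article.
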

 \begin{proof}
 Note that
 $$
 K(\mu,\nu) = \int_{S^{n-1}} \log \frac{\sqrt{h^2 + |\nabla_{S^{n-1}} h|^2}}{h} d \mu.
 $$
 One gets
 \begin{align*}
 \delta_v 
 K(\mu,\nu)    & = - \int_{S^{n-1}} \frac {u}{h} d\mu  + \int_{S^{n-1}} \frac{u h + \langle \nabla_{S^{n-1}} u, \nabla_{S^{n-1}} h \rangle}{{h^2 + |\nabla_{S^{n-1}} h|^2}}   d \mu
 - \int_{S^{n-1}} \log {h} \cdot v d \mu
 \\&  + \int_{S^{n-1}} \log {\sqrt{h^2 + |\nabla_{S^{n-1}} h|^2}} v d \mu.
 \end{align*}
 Note that by (\ref{ibp}) 
 \begin{align*}
 \int_{S^{n-1}} & \log {\sqrt{h^2 + |\nabla_{S^{n-1}} h|^2}} v d \mu
 = 
 -  \int_{S^{n-1}}  h  \frac{ \Bigl\langle D^2 h \cdot \nabla_{S^{n-1}} h, ( D^2 h)^{-1} \nabla_{S^{n-1}} \bigl( \frac{u}{h} \bigr) \Big\rangle }{{h^2 + 
 |\nabla_{S^{n-1}} h|^2}} d \mu
\\& = -  \int_{S^{n-1}}  h  \frac{ \Bigl\langle \nabla_{S^{n-1}} h,  \nabla_{S^{n-1}} \bigl( \frac{u}{h} \bigr) \Big\rangle }{{h^2 + |\nabla_{S^{n-1}} h|^2}} d \mu =  -  \int_{S^{n-1}}    
\frac{ \langle \nabla_{S^{n-1}} h,  \nabla_{S^{n-1}}  u \rangle }{{h^2 + |\nabla_{S^{n-1}} h|^2}} d \mu 
\\& +  \int_{S^{n-1}}    \frac{ u |\nabla_{S^{n-1}} h|^2}{(h^2 + |\nabla_{S^{n-1}} h|^2) h} d \mu.
  \end{align*}
  Substituting this into the expression for 
  $\delta_v K(\mu,\nu)$ one gets the claim.
 \end{proof}
 
 This result applied to the mapping $S=T^{-1}$
 gives the following formula (see (\ref{fr})):
 \begin{corollary}
 \label{varnu}
 The variation $\delta_{\omega}
 K(\mu,\nu)  =
 \lim_{\varepsilon \to 0} \frac{K(\mu, \nu_{\varepsilon}) - K(\mu,\nu)}{\varepsilon},
 $
 where 
 $$
  \nu_{\varepsilon} =  (1 + \varepsilon \omega) \nu, \ \int_{S^{n-1}} \omega d \nu =0,
  $$
 satisfies
 $$
 \delta_{\omega}  K(\mu,\nu)  = -\int_{S^{n-1}} \log f  \cdot \omega \ d \nu =  \int_{S^{n-1}} \log r   \cdot \omega \ d \nu.
 $$
 \end{corollary}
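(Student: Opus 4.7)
The plan is to exploit the symmetry of the cost function $c(x,y) = \log\frac{1}{\langle x,y\rangle} = c(y,x)$, which immediately yields
$$
K(\mu,\nu) = K(\nu,\mu).
$$
Therefore the variation $\delta_\omega K(\mu,\nu)$ with respect to a perturbation $\nu_\varepsilon = (1+\varepsilon\omega)\nu$ of the second argument equals the variation of $K(\nu,\mu)$ with respect to the same perturbation of its \emph{first} (source) argument.

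Next I would invoke the previous lemma with roles of source and target exchanged. In that setting the optimal map is $S = T^{-1}$, which, as displayed in the excerpt, admits exactly the same structural form as $T$ but with the support function $h$ replaced by $f = 1/r$:
$$
S(y) = \frac{f(y)\,y + \nabla_{S^{n-1}} f(y)}{\sqrt{f^2(y) + |\nabla_{S^{n-1}} f(y)|^2}}.
$$
Thus $f$ plays the role of the Kantorovich potential for the reversed transport problem, and the lemma applies verbatim with $(h,\mu,v)$ replaced by $(f,\nu,\omega)$. This gives
$$
\delta_\omega K(\nu,\mu) = -\int_{S^{n-1}} \log f \cdot \omega\, d\nu.
$$
Combining with the symmetry observation yields the first equality of the corollary, and the second follows from the identity $f = 1/r$, which converts $-\log f$ into $\log r$.

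I expect the only subtlety to be checking that the hypotheses under which the previous lemma was derived (smoothness and positive definiteness of $D^2 h$) transfer to the reversed problem, i.e.\ that $f$ is $C^2$ with $D^2 f > 0$. Under the standing smoothness assumption on $T$, $\rho_\mu$, $\rho_\nu$ and the non-degeneracy of $D^2 h$, these transfer automatically because $S = T^{-1}$ is then a smooth diffeomorphism and the roles of $(h,\mu)$ and $(f,\nu)$ are genuinely interchangeable in the change-of-variables formula (\ref{MA1}). No further computation is required.
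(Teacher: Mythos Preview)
Your proposal is correct and matches the paper's approach exactly: the paper simply states that the preceding lemma, applied to the inverse map $S=T^{-1}$ (whose potential is $f=1/r$ by (\ref{fr})), yields the corollary. Your explicit appeal to the symmetry $K(\mu,\nu)=K(\nu,\mu)$ and your remark on the transfer of the regularity hypotheses make the argument more complete, but the underlying idea is identical.
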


 \section{Variational formulations of the log-Minkowski problem}

 Given a convex body $\Omega$ containing the origin let us denote by $m$ the measure on $\partial \Omega$ which is the image of the Lebesgue measure
 on $\Omega$ under the mapping $x \to \frac{x}{\|x\|_{\Omega}}$, where $\| \cdot \|_{\Omega}$ is the associated norm.
 This measure  can be expressed as follows: 
 $$
 m = \frac{1}{n} \langle x, n_{\partial_{\Omega}} \rangle \cdot\mathcal{H}^{n-1}|_{\partial \Omega},
 $$
where $\mathcal{H}^{n-1}$ is the  $(n-1)$-dimensional Hausdorff measure. 
 
 \begin{definition}
 The image $\mathcal{C}_{\Omega}$ of $m$ under the Gauss map is called {\bf the cone measure of $\Omega$ }. 
 \end{definition}
 
Note that
 $$
\mathcal{C}_{\Omega} = \frac{1}{n} h \det D^2 h \cdot \mathcal{H}^{n-1}|_{S^{n-1}},
 $$
 provided $h$ is sufficiently regular.
 
 {\bf Log-Minkowski problem.} Given a probability measure $\mu$ on $S^{n-1}$ find a convex set $\Omega \subset \mathbb{R}^n$
 with $|\Omega|=1$ such that $\mu$ is the cone measure of $\Omega$.
 
 The analytical formulation of the log-Minkowski problem
 looks as follows: given a probability measure with density $$\mu = \rho_{\mu} \cdot \sigma =  \frac{\rho_{\mu}}{n|B|} \cdot \mathcal{H}^{n-1}|_{S^{n-1}}  $$ on the unit sphere find a convex set $\Omega$ of volume $1$ which 
 support function $h$ satisfies
 $$
\rho_{\mu} =  h \det D^2 h |B|.
 $$  
Note that $\rho_{\mu}$ is indeed a probability density:
$$
\int_{S^{n-1}} \rho_{\mu}  d \sigma = \frac{1}{n|B|} \int_{S^{n-1}} \rho_{\mu} dx = \frac{1}{n} \int_{S^{n-1}} h \det D^2 h  dx =  \frac{1}{n} \int_{\partial {\Omega}} \langle x, n_{\partial_{\Omega}}\rangle  dx = |\Omega|=1. 
$$
 
 {\bf Assumption.} In what follows we assume that $\mu, \nu$  are symmetric measures and $h$ is an even function.

A variational approach to the log-Minkowski problem was suggested in \cite{BLYZ} (Lemma 4.1).
It was shown that  a solution $h$ to the following variational problem
 $$
  \int_{S^{n-1}} \log h \ d \mu  \mapsto \min,
 $$
 considered in the class of symmetric support functions of convex bodies with volume $1$, is a support function of a body $\Omega$ 
 solving the log-Minkowski problem for $\mu$.
  
  In our work we propose another variational functional for the symmetric  log-Minkowski problem 
  and defined with the help of mass transportation.
  Our approach partially motivated by 
  the results of \cite{S}.

  Another important relation of the log-Minkowski problem to the mass transportation problem  implicitly appeared 
  in \cite{KM}.
 Let $h$ be a support function of some convex set $\Omega$ of volume 1.
   The   second-order elliptic operator  $L_{\Omega}$ defined by the following integration by parts formula
$$
\frac{1}{n-1} \int_{S^{n-1}} \langle (D^2 h)^{-1} \nabla_{S^{n-1}} f, \nabla_{S^{n-1}} g \rangle h^2 \det D^2 h d x
= - \int_{S^{n-1}}  g \bigl(L_{\Omega} f \bigr) h  \det D^2 h d x
$$
is closely related to the log-Minkowski  problem. It has been shown in \cite{KM} that the (infinitesimal) log-Minkowski problem is a spectral problem
for $L_{\Omega}$. 
   
   It is easy to see that  $L_{\Omega}$ is a particular case of the operator
     $(n-1) L_{\mu,\nu}$. It corresponds to the
      following  couple of measures:
     $$\mu = \frac{1}{n} h \det D^2 h \cdot \mathcal{H}^{n-1}|_{S^{n-1}}, \ \ \  \nu = \frac{1}{n}  r^n \cdot \mathcal{H}^{n-1}|_{S^{n-1}},$$
     where 
     $h$ is the support and $r$ is the radial function of $\Omega$. Clearly, $\nu$
   is the push-forward image of $\mu$ under the  mapping $$T= (h x + \nabla h)/\sqrt{h^2 + |\nabla_{S^{n-1}} h |^2 }.$$
   The exact expression for the operator $L_{\mu,\nu}$ takes the form
   \begin{equation}
   \label{Lcone}
   L_{\mu,\nu} \Bigl( \frac{u}{h} \Bigr) =  {\rm Tr}  (D^2 h)^{-1} D^2 u - (n-1) \frac{u}{h}.
   \end{equation}
   
   Operator (\ref{Lcone})  has been studied already by D.~Hilbert in his work on the Brunn-Minkowski inequality. The original motivation
   of E.~Milman and the author was to study infinitesimal versions of the Brunn-Minkowski inequality, which are inequalities of the Poincar{\'e} type. See in this respect 
   \cite{Colesanti}, \cite{KM1}, \cite{KM2}.
   
   Recal that the entropy of the probability measure 
 $$
 \nu = \rho_{\nu} \cdot  \sigma
 $$
 is given by 
  $$
  Ent(\nu) = \int \rho_{\nu} \log  \rho_{\nu}   d {\sigma}
  $$
  and  $ Ent(\nu)  = +\infty$ if $\nu$ has no density.

 The variational formulae proved in the previous section immediately give a formal proof that the minimal points of 
 the following entropic/transportational functonal are precisely the solutions to the log-Minkowski problem.
 We will give later a rigorous justification of this fact.

  \begin{proposition}
  \label{stationary}
  Let  $\nu$ be a stationary point of the functional 
  $$
  F(\nu) = \frac{1}{n}  Ent(\nu) -  K(\mu,\nu).
  $$
    Then  $\mu = C h \det D^2 h \cdot \sigma$, where $h$ is the potential of the transportation mapping pushing forward
    $\mu$ onto $\nu$.
  \end{proposition}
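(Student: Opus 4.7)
The plan is to compute the first-order variation of $F(\nu)$ along a perturbation $\nu_{\varepsilon} = (1+\varepsilon \omega)\nu$ with $\int \omega\,d\nu = 0$ and extract the Euler--Lagrange condition. The transportation term is already handled by Corollary~\ref{varnu}, which gives
$$
\delta_{\omega} K(\mu,\nu) = \int_{S^{n-1}} \log r \cdot \omega\, d\nu,
$$
so only the entropy variation has to be computed.

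For the entropy, I would simply expand
$$
Ent(\nu_{\varepsilon}) = \int_{S^{n-1}} (1+\varepsilon\omega) \rho_{\nu} \log\bigl((1+\varepsilon\omega)\rho_{\nu}\bigr)\, d\sigma
$$
to first order in $\varepsilon$. The coefficient of $\varepsilon$ is $\int \omega \rho_{\nu} (\log\rho_{\nu} + 1)\, d\sigma$, and the extra $+1$ contributes $\int \omega\, d\nu = 0$ by hypothesis, so the derivative reduces to $\int \log\rho_{\nu} \cdot \omega\, d\nu$. Hence the stationarity condition reads
$$
\int_{S^{n-1}} \Bigl( \frac{1}{n}\log \rho_{\nu} - \log r \Bigr) \omega\, d\nu = 0
$$
for every zero-$\nu$-mean test function $\omega$, forcing $\tfrac{1}{n}\log \rho_{\nu} - \log r$ to be $\nu$-a.e.\ constant on the support of $\nu$. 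Equivalently, $\rho_{\nu} = C_0\, r^n$ for some positive constant $C_0$.

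The last step is to translate this identity for $\nu$ into the asserted identity for $\mu$ using the Monge--Amp\`ere change of variables~(\ref{MA1}), which reads $\rho_{\mu} = \rho_{\nu}(T) r^{-n}(T) \cdot h \det D^2 h$. Substituting $\rho_{\nu}(T) = C_0 r^n(T)$, the radial factor cancels exactly, leaving $\rho_{\mu} = C_0\, h \det D^2 h$, i.e.\ $\mu = C_0\, h \det D^2 h \cdot \sigma$, which is the conclusion.

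I do not expect any serious obstacle here: under the smoothness assumptions stated at the start of Section~4 (regularity of $\rho_{\mu}, \rho_{\nu}, T$ and strict positivity of $D^2 h$) every manipulation above is routine, and interchanging differentiation in $\varepsilon$ with the $\sigma$-integrals is justified by the compactness of $S^{n-1}$ together with positivity of $\rho_{\nu}$. The only minor point needing care is that the Euler--Lagrange relation $\rho_{\nu} = C_0 r^n$ is obtained $\nu$-a.e., and one must invoke positivity (and continuity) of $\rho_{\nu}$ to promote it to a pointwise identity on $S^{n-1}$ so that evaluation at $T(x)$ in the change-of-variables formula is legitimate. This is presumably why the author flags the argument as a \emph{formal} proof, to be made fully rigorous after the variational setting has been tightened in the subsequent part of the section.
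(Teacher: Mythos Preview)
Your proposal is correct and follows essentially the same approach as the paper's own proof: compute the variation of $K(\mu,\nu)$ via Corollary~\ref{varnu}, compute the entropy variation directly, combine them into the Euler--Lagrange condition $\rho_{\nu}=C_0 r^n$, and then apply the change of variables formula~(\ref{MA1}) to obtain $\rho_{\mu}=C_0\, h\det D^2 h$. The paper's argument is more terse (it simply states the entropy variation without expansion), but the logic is identical.
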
 
  
  \begin{proof}
 Set: $\nu_{\varepsilon} = (1 + \varepsilon \omega) \nu$, $\int \omega d \nu =0$. Then the variation of $K(\mu,\nu)$ (see Lemma \ref{varnu}) is equal to $ \int \omega \log r   d\nu $ and the variation of
 $Ent(\nu)$ is equal to $\int \omega \log\rho_{\nu} d \nu$.
 If $\nu$ is a stationary point, there holds:
 $$
 n \log r =   \log \rho_{\nu}  +  c.
 $$
 Thus $\nu = \rho_{\nu} \cdot \sigma =  e^{-c} r^n \cdot \sigma$. The expression for $\mu$ follows from the change of variables  formula
  and the normalization assumption: $\mu(S^{n-1})=1$.
 \end{proof}

Our observation has the following Euclidean companion.
The functional $F$ is a spherical analog of the functional  (\ref{KEfunc}),
while $
 h \mapsto \int_{S^{n-1}} \log h d \mu $ seems to be similar to (\ref{prekopa}).

Given a  probability measure $\nu = \varrho dx$ on $\mathbb{R}^n$ one can try to find
a log-concave measure $\mu = e^{-\Phi} dx$ (i.e., $\Phi$ is a convex function) 
satisfying the following remarkable property:
$\nu$~is the image of $\mu$ under the mapping $T$ 
generated by the logarithmic gradient of~$\mu$:
$$
T(x) = \nabla \Phi(x),
\quad 
\nu = \mu \circ T^{-1}.
$$
Following the terminology from \cite{CK},
 we say that  $\nu$ is a moment measure if such a function $\Phi$ exists.
 The associated Monge--Amp\`ere equation looks as follows :
$$
e^{-\Phi} = \varrho(\nabla \Phi) \det D^2 \Phi.
 $$ 
 It   is known that 
$\Phi$ is a maximum point of the following functional:
\begin{equation}
\label{prekopa}
J(f) = \log \int_{\mathbb{R}^n} e^{-f^*} dx  - \int_{\mathbb{R}^n} f d \nu,
\end{equation}
where $f^*$ is the Legendre transform of $f$.

An alternative  viewpoint was suggested in \cite{S}: it was shown  that $\rho = e^{-\Phi} $ gives  minimum to the functional
\begin{equation}
\label{KEfunc}
\mathcal{F}(\rho) = -\frac{1}{2} W^2_2(\nu, \rho dx) 
+ \frac{1}{2} \int_{\mathbb{R}^n} x^2 \rho \ dx + \int_{\mathbb{R}^n} \rho \log \rho dx.
 \end{equation}
 Other remarks on relations between the log-Minkowski problem and the moment measures can be found in \cite{CK}.
 Futher developments related to transportational and stability inequalities see in \cite{FGJ}, \cite{KolKos}, \cite{Fathi2}.
 
 \section{Minimizers of the variational functional}
  
 It is known that functional (\ref{KEfunc})
  admits certain displacement convexity properties (see \cite{S}). Unfortunately,
  we don't know whether 
   $K(\mu,\nu)$ is displacement convex. To see that the standard arguments fail, let us assume that $h$ is sufficiently regular and  apply the change of variables formula
   $$
     \rho_{\mu} = \rho_{\nu}(T) \frac{h \cdot \det D^2 h }{(h^2 + |\nabla_{S^{n-1}} h|^2)^{\frac{n}{2}}}.
     $$
     Take logarithm
     $$
     \log \rho_{\mu} = \log \rho_{\nu}(T) + n \log \Bigl( \frac{h}{\sqrt{ h^2 + |\nabla_{S^{n-1}}  h|^2 }}\Bigr)
     + \log \det \Bigl( {\rm Id} + \frac{\nabla^2_{S^{n-1}} h}{h}\Bigr)
     $$
     and integrate with respect to $\mu$:
     $$
     \frac{1}{n} Ent(\nu) - K(\mu,\nu) = \frac{1}{n} Ent(\mu)
     - \frac{1}{n} \int_{S^{n-1}} \log \det \Bigl( {\rm Id} + \frac{\nabla^2_{S^{n-1}} h}{h}\Bigr) d \mu.
     $$
    It is clear that the function
     $$
     - \log \det \Bigl( Id + \frac{\nabla^2_{S^{n-1}} h}{h}\Bigr)
     $$
  is not convex  with respect to the natural interpolation $t \to t h_1 + (1-t) h_2$.

  Another representation can be obtained from the duality principle. Let $(h,r)$ be the solution 
  to the dual Kantorovich problem. Applying Kantorovich duality one gets
  $$
  K(\mu,\nu) = -\int_{S^{n-1}} \log h d \mu + \int_{S^{n-1}} \log r d \nu. 
  $$
  Hence
  \begin{align*}
    \frac{1}{n} Ent(\nu) -  K(\mu,\nu) &  =   \frac{1}{n} Ent(\nu) + \int_{S^{n-1}} \log h d \mu - \int_{S^{n-1}} \log r d \nu
   \\&  = \int_{S^{n-1}} \log h d \mu + \frac{1}{n} \int_{S^{n-1}} \log \Bigl( \frac{\rho_{\nu}}{r^n}\Bigr)  \rho_{\nu} d \sigma.
  \end{align*}
  Set $ m =\frac{1}{C} r^n \cdot \sigma$, where  the normalization constant equals
  $$
  C = \int_{S^{n-1}} r^n d \sigma = \int_{S^{n-1}} h \det D^2 h d \sigma  = \frac{1}{n|B|}\int_{\partial \Omega_h} h(n_{\partial \Omega}) dx
  =  \frac{|\Omega_h|}{|B|}.
  $$
 
 Finally,
  \begin{align*}
      \frac{1}{n} Ent(\nu) - K(\mu,\nu) &
      = \int_{{S^{n-1}}} \log h d \mu - \frac{1}{n} \log C +  \frac{1}{n} \int_{S^{n-1}} \log \bigl( \frac{\rho_{\nu}}{r^n/C}\bigr) d \nu.
  \\& = \int_{S^{n-1}} \log h d \mu - \frac{1}{n}\log \int_{S^{n-1}} r^n d\sigma + \frac{1}{n} Ent_{m}(\nu),
  \end{align*}
where $ Ent_{m}(\nu) =\int_{S^{n-1}} \log \frac{d \mu}{ dm} d\mu \ge 0 $. 

\begin{theorem}
\label{010718}
Assume that $\mu(L \cap S^{n-1})=0$ for every hyperplane $L$ containing the origin. Then
the functional
$$
 F(\nu) = \frac{1}{n} Ent(\nu) - K(\mu,\nu)
$$ considered on the space of measures with $\rho_{\nu} \in L^{\infty}(\sigma)$ attains its minimum at some point.

For every such point the related transportational potential $h$ is a support function  of a symmetric convex body which after a suitable renormalization
gives minimum to 
$$
F_0(h) = \int_{S^{n-1}} \log h d \mu, \ |\Omega_h|=1.
$$
In particular, $h$ is a solution  to the log-Minkowski problem.
\end{theorem}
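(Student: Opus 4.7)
The plan is to reduce the minimization of $F$ to that of $F_0$ under a volume constraint via the decomposition derived just before the theorem,
\begin{equation*}
F(\nu)=G(h)+\frac{1}{n}Ent_{m}(\nu),\qquad G(h):=\int_{S^{n-1}}\log h\,d\mu-\frac{1}{n}\log C(h),
\end{equation*}
where $h$ is the support function produced by the optimal transport $\mu\to\nu$, $r$ the corresponding radial function, $C(h)=\int_{S^{n-1}}r^{n}\,d\sigma=|\Omega_{h}|/|B|$, and $m=C(h)^{-1}r^{n}\cdot\sigma$. Since $Ent_{m}(\nu)\ge 0$ with equality iff $\nu=m$, this already yields the pointwise bound $F(\nu)\ge G(h_\nu)$ for every admissible $\nu$.

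First I would observe that $G$ is scale-invariant in $h$: replacing $h$ by $\lambda h$ shifts the first summand by $\log\lambda$ (using $\mu(S^{n-1})=1$) and the second by $-\log\lambda$ (using $|\Omega_{\lambda h}|=\lambda^{n}|\Omega_{h}|$). Hence minimizing $G$ over symmetric support functions is equivalent, after rescaling, to minimizing $F_{0}(h)=\int_{S^{n-1}}\log h\,d\mu$ subject to $|\Omega_{h}|=1$. Under the hypothesis that $\mu$ gives no mass to any great subsphere of codimension one, the subspace concentration condition of \cite{BLYZ} holds strictly, so $F_{0}$ attains its minimum at some symmetric support function $h^{*}$, and by \cite{BLYZ}, Lemma~4.1, this $h^{*}$ solves the log-Minkowski problem for $\mu$; in particular $\rho_{\mu}=|B|\,h^{*}\det D^{2}h^{*}$.

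Next I would produce the corresponding minimizer of $F$. Set $\nu^{*}:=m_{h^{*}}=|B|\,r^{n}\cdot\sigma$. Substituting $\rho_{\nu^{*}}(T)=|B|\,r^{n}(T)$ into the change-of-variables formula (\ref{MA1}) yields $\rho_{\mu}=|B|\,h^{*}\det D^{2}h^{*}$, which is precisely the log-Minkowski equation just verified; hence $T_{h^{*}}\#\mu=\nu^{*}$, so $h_{\nu^{*}}=h^{*}$, the entropy $Ent_{m_{h^{*}}}(\nu^{*})$ vanishes, and $F(\nu^{*})=G(h^{*})=\inf G$. Combined with the pointwise bound $F(\nu)\ge G(h_\nu)\ge\inf G$, this exhibits $\nu^{*}$ as a minimizer of $F$. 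Conversely, for any minimizer $\nu$ both inequalities must be equalities: the vanishing entropy forces $\nu=m_{h_\nu}$, while $G(h_\nu)=\inf G$ means that after rescaling to $|\Omega_{h_\nu}|=1$ the function $h_\nu$ minimizes $F_{0}$ and therefore solves the log-Minkowski problem for $\mu$.

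The main obstacle is to make the reduction rigorous for every admissible $\nu$ (with $\rho_\nu\in L^{\infty}(\sigma)$): one must ensure that the transport $\mu\to\nu$ produces a bona fide symmetric support function $h_\nu$ and that the change-of-variables identity underlying the decomposition is valid. For this I would invoke Lemma \ref{transport-symmetric} together with the existence/regularity theory of \cite{Oliker}, \cite{Bert}. The $h^{*}$ supplied by \cite{BLYZ} may only have weak regularity, so the identification $T_{h^{*}}\#\mu=\nu^{*}$ likely requires an approximation argument: replace $h^{*}$ by smooth, strictly convex approximants, apply the decomposition to each, and pass to the limit using the lower semicontinuity of $Ent$ and of $(\mu,\nu)\mapsto K(\mu,\nu)$, the latter being immediate from lower semicontinuity of $c$ (cf.\ Remark \ref{1606}).
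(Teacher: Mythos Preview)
Your proposal is correct and follows essentially the same route as the paper: both arguments hinge on the decomposition $F(\nu)=\int\log h\,d\mu-\tfrac{1}{n}\log(|\Omega_h|/|B|)+\tfrac{1}{n}Ent_m(\nu)$, use $Ent_m(\nu)\ge 0$ to get the lower bound $F(\nu)\ge G(h_\nu)$, invoke \cite{BLYZ} to produce a minimizer $h^*$ of $F_0$ (which solves the log-Minkowski problem and hence makes the relative entropy vanish), and then read off both existence and the characterization of minimizers from the resulting equality chain. Your explicit remark on the scale-invariance of $G$ and your final paragraph on regularity/approximation are more careful than the paper, which simply normalizes $|\Omega_h|=1$ and applies the decomposition without further comment.
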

\begin{proof}
Set $m_{F} = \inf_{\nu} F(\nu)$. We want to estimate $m_{F}$ from below. Clearly, it is suffient to estimate
$F(\nu)$ from below for a positive density $\rho_{\nu}$ .   In this case the Aleksandrov problem admits a solution $(h, r)$ by Lemma \ref{transport-symmetric}  and, as we have already seen,
 $$\frac{1}{n} Ent(\nu) - K(\mu,\nu) = \int_{S^{n-1}} \log h d \mu - \frac{1}{n}\log \int_{S^{n-1}} r^n d\sigma + \frac{1}{n} Ent_{m}(\nu)
 .$$

Note that potential $h$ which defines optimal transportation $T$ 
is uniquely defined up to multiplication by a constant $\lambda>0$.
Thus without loss of generality one can normalize $h$ is such a way that
$|\Omega_h|=1$. Under this normalization
\begin{equation}
\label{030718-0}
      \frac{1}{n} Ent(\nu) - K(\mu,\nu)
      =  \int_{S^{n-1}} \log h d \mu + \frac{1}{n} \log |B| +  \frac{1}{n} Ent_m(\nu).
\end{equation}
Using that the term $Ent_m(\nu)$ is non-negative, one gets
$$
m_{F} \ge \min_{h: |\Omega_h|=1}  \int_{S^{n-1}} \log h d \mu + \frac{1}{n} \log |B|.
$$
Now let $h$ be a minimizer  of $F_0$. It follows from the result of \cite{BLYZ} that  $h$ exists and $h$ is a solution to the log-Minkowski problem, hence the push-forward measure $\mu \circ T^{-1}$ of $\mu$ under $T$ satisfies $\rho_{\nu} = c {r^n}$ and $Ent_m(\nu) =0$.
Thus for this $\nu$ one gets
$$
  \frac{1}{n} Ent(\nu) - K(\mu,\nu)
      =  \min_{h: |\Omega_h|=1}  \int_{S^{n-1}} \log h d \mu  + \frac{1}{n} \log |B|.
$$
We obtain that $\nu$ is a minimizer for $F$
and 
\begin{equation}
\label{030718}
m_F =  \min_{h: |\Omega_h|=1}  \int_{S^{n-1}} \log h d \mu  + \frac{1}{n} \log |B|.
\end{equation}

Now let $\tilde{\nu}$ be any minimizer of $F$. From the  relations  (\ref{030718-0}) and (\ref{030718}) we infer that $Ent_m(\tilde{\nu})=0$, where
$m=c r^n \cdot \sigma$ and $(h,r)$ is a solution to the dual Kantorovich problem for $(\mu, \tilde{\nu})$.
We obtain from (\ref{030718}) that $h$ is a minimizer of $F_0$. This completes the proof.
\end{proof}

\section{Uniqueness for log-Minkowski problem and transportation inequalities}

In what follows we are given a symmetric  probability measure $\mu$ on $S^{n-1}$.
We assume throughout the section that
$\mu(L \cap S^{n-1})=0$ for every hyperplane containing the origin. In particular, $\mu$ satisfies the subspace concentration property.
According to \cite{BLYZ} there exists a solution to the log-Minkowski problem for $\mu$. The uniqueness of a solution is an open problem.
In this paper we propose a weaker conjecture.

{\bf Conjecture: } {\it The functional
$$
 F(\nu) = \frac{1}{n} Ent(\nu) - K(\mu,\nu)
$$
has a unique global minimizer.}

Clearly, if $F$ has distinct global minimizers, then they all are solutions to the log-Minkowski problem according to Theorem \ref{010718}.

The representation 
\begin{equation}
\label{EK}
     \frac{1}{n} Ent(\nu) - K(\mu,\nu) = \frac{1}{n} Ent(\mu)
     - \frac{1}{n} \int_{S^{n-1}} \log \det \Bigl( Id + \frac{\nabla^2 h}{h}\Bigr) d \mu.
    \end{equation}
    obtained above
     can be used to compute  the second variation of $F$. Assuming that $h$ is sufficiently regular and using the second-order necessary condition for minimum
     one can easily get.
\begin{theorem} 
\label{inf-unique}
(Infinitesimal uniqueness).
Assume that the log-Minkowski problem admits a unique solution $\Omega_h$ for $\mu$, and $h$ is twice continuously differentiable.
Then every  even   function $u \in C^2(S^{n-1})$  satisfies the following inequality
$$
\int_{S^{n-1}} \| (D^2 h)^{-1} D^2 u\|^2_{HS} d \mu \ge (n-1) \int_{S^{n-1}} \Bigl( \frac{u}{h}\Bigr)^2 d \mu.
$$
\end{theorem}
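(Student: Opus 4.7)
The plan is to turn the inequality into the second-order necessary condition at a minimizer of $F$, parametrizing variations at the level of the support function rather than of $\nu$. Using $D^2 h = h(\mathrm{Id} + \nabla^2_{S^{n-1}} h/h)$, formula (\ref{EK}) reads
\begin{equation*}
F(\nu) \;=\; \tfrac{1}{n}Ent(\mu) \;-\; \tfrac{1}{n}\int_{S^{n-1}} \log\det D^2 h\, d\mu \;+\; \tfrac{n-1}{n}\int_{S^{n-1}} \log h\, d\mu,
\end{equation*}
where $h$ is the Kantorovich potential transporting $\mu$ to $\nu$. Only the two integrals on the right depend on $\nu$ (through $h$), so the inequality to prove is precisely the second-order information contained in these two terms.

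Concretely, given any even $u\in C^2(S^{n-1})$, I would set $h_t := h + tu$. Since $h\in C^2$ with $D^2 h>0$, for $|t|$ small the function $h_t$ is again a smooth even support function with $D^2 h_t>0$. Letting $T_t$ be the map (\ref{TH}) associated to $h_t$, I would define $\nu_t := (T_t)_*\mu$. By (\ref{MA}), $h_t$ is the transport potential from $\mu$ to $\nu_t$; moreover $\nu_t$ is symmetric and has bounded density, hence belongs to the admissible class of Theorem \ref{010718}. The assumed uniqueness of the log-Minkowski solution, combined with Theorem \ref{010718}, identifies $\nu_0$ as the unique minimizer of $F$; in particular
\begin{equation*}
\frac{d^2}{dt^2} F(\nu_t)\Big|_{t=0} \;\geq\; 0.
\end{equation*}

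The remaining step is a standard second-order expansion. Since $\frac{d}{dt}D^2 h_t = D^2 u$ and $\frac{d^2}{dt^2}D^2 h_t = 0$, the usual identity $\frac{d^2}{dt^2}\log\det A_t = -\mathrm{Tr}(A_t^{-1}\dot A_t A_t^{-1}\dot A_t) + \mathrm{Tr}(A_t^{-1}\ddot A_t)$ gives
\begin{equation*}
\frac{d^2}{dt^2}\int \log\det D^2 h_t\, d\mu\bigg|_{t=0} = -\int_{S^{n-1}} \|(D^2 h)^{-1} D^2 u\|_{HS}^2\, d\mu,
\end{equation*}
and clearly $\frac{d^2}{dt^2}\int\log h_t\, d\mu\big|_{t=0} = -\int u^2/h^2\, d\mu$. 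Substituting into the representation of $F(\nu_t)$ and applying the second-order condition yields
\begin{equation*}
-\tfrac{1}{n}\Big(-\!\!\int\|(D^2 h)^{-1}D^2 u\|_{HS}^2 d\mu + (n-1)\!\!\int \tfrac{u^2}{h^2} d\mu\Big) \geq 0,
\end{equation*}
which is exactly the claimed inequality.

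The main obstacle I would expect is conceptual rather than computational: verifying that the family $\{\nu_t\}$ genuinely ranges in a neighbourhood of $\nu_0$ inside the admissible class of Theorem \ref{010718}, and that $h_t$ (not merely some rescaling of it) is the actual optimal potential from $\mu$ to $\nu_t$. Both are immediate from the construction via (\ref{TH})–(\ref{MA}) and the uniqueness of optimal transport for the cost $c$ in the symmetric setting (Lemma \ref{transport-symmetric}), but it is here that the hypotheses on $\mu$ and the assumed smoothness $h\in C^2$ are genuinely used; once these admissibility issues are settled, the rest is just the log-determinant computation above.
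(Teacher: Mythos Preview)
Your proposal is correct and is exactly the argument the paper has in mind: the text preceding the theorem says only that the inequality follows from ``the second-order necessary condition for minimum'' applied to representation~(\ref{EK}), and you have written out precisely that computation --- the linear variation $h_t = h + tu$, the identification of $\nu_0$ as a minimizer of $F$ via Theorem~\ref{010718}, and the standard second derivative of $\log\det$. There is nothing to add; your treatment of the admissibility issues (that $h_t$ really is the optimal potential for $\nu_t$, and that $\nu_t$ has bounded density) is more careful than the paper's own sketch.
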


\begin{proposition}
\label{trfh}
For any given couple of support functionals $h, f \in C^2(S^{n-1})$, $n>2$, the probability measure  
$$
\mu = \frac{h \det D^2 h}{n |\Omega_h|} \cdot \mathcal{H}^{n-1}|_{S^{n-1}}
$$
satisfies the following inequality
$$
\frac{1}{n-1} \int_{S^{n-1}} {\rm Tr}(D^2 f)^{-1} (D^2 h) d \mu   \ge \Bigl(\frac{|\Omega_h|}{|\Omega_f|} \Bigr)^{\frac{1}{n-1}} \Bigl( \int_{S^{n-1}}  \frac{h}{f} d \mu \Bigr)^{-\frac{1}{n-1}}.
$$
\end{proposition}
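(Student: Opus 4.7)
The plan is to combine two ingredients: (i) a pointwise arithmetic--geometric mean inequality applied to the eigenvalues of $(D^2 f)^{-1} D^2 h$, and (ii) a three-factor H\"older inequality applied to the identity $\int_{S^{n-1}} h \det D^2 h \, dx = n|\Omega_h|$. For (i), since $D^2 h$ and $D^2 f$ are positive definite on each tangent space, the matrix $(D^2 f)^{-1} D^2 h$ has positive eigenvalues, and AM--GM yields the pointwise bound
\begin{equation*}
\frac{1}{n-1} {\rm Tr} \bigl((D^2 f)^{-1} D^2 h\bigr) \ge \Bigl( \frac{\det D^2 h}{\det D^2 f} \Bigr)^{1/(n-1)}.
\end{equation*}
Setting $A := \int_{S^{n-1}} (\det D^2 h/\det D^2 f)^{1/(n-1)} d\mu$ and $B := \int_{S^{n-1}} (h/f) \, d\mu$, integration against $\mu$ reduces the proposition to the bound $A^{n-1} B \ge |\Omega_h|/|\Omega_f|$.

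Unpacking $d\mu = h\det D^2 h /(n|\Omega_h|)\,dx$ gives $n|\Omega_h|A = \int h (\det D^2 h)^{n/(n-1)} (\det D^2 f)^{-1/(n-1)} \, dx$ and $n|\Omega_h|B = \int h^2 \det D^2 h / f \, dx$, while $n|\Omega_f| = \int f \det D^2 f \, dx$. The crux is to decompose the integrand $h \det D^2 h$ as the three-factor product
\begin{equation*}
(f \det D^2 f)^{\frac{1}{n+1}} \cdot \bigl( h (\det D^2 h)^{n/(n-1)} (\det D^2 f)^{-1/(n-1)} \bigr)^{\frac{n-1}{n+1}} \cdot \bigl( h^2 \det D^2 h / f \bigr)^{\frac{1}{n+1}},
\end{equation*}
which a short check of the exponents of $h, f, \det D^2 h, \det D^2 f$ confirms, with weights $1/(n+1), (n-1)/(n+1), 1/(n+1)$ summing to $1$. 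Applying H\"older with conjugate exponents $p_1=n+1$, $p_2=(n+1)/(n-1)$, $p_3=n+1$ to $\int h \det D^2 h\,dx$ and raising to the $(n+1)$-st power yields
\begin{equation*}
(n|\Omega_h|)^{n+1} \le (n|\Omega_f|) \cdot (n|\Omega_h| A)^{n-1} \cdot (n|\Omega_h| B),
\end{equation*}
which after cancellation is precisely $A^{n-1} B \ge |\Omega_h|/|\Omega_f|$.

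The only real obstacle is identifying the correct weighting in the H\"older decomposition; the demands that $f$ and $\det D^2 f$ carry total exponent zero, that $h$ and $\det D^2 h$ each carry total exponent one, and that $\sum 1/p_i = 1$ form a linear system whose unique solution is the triple above. Positivity of $D^2 h$ and $D^2 f$, built into the non-degenerate $C^2$ support-function setup (with approximation otherwise), is all that is needed to justify both AM--GM and H\"older. The Blaschke--Santal\'o inequality mentioned in the introduction does not seem to be required at this level of generality; it enters only upon specializing to $h\equiv 1$ (so $\mu=\sigma$) and combining the resulting form of the proposition with Jensen's inequality and Santal\'o to recover~(\ref{trhs}).
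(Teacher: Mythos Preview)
Your proof is correct and follows a route that is close in spirit to the paper's but organized more efficiently. Both arguments rest on the same two pillars, the arithmetic--geometric inequality for the eigenvalues of $(D^2 f)^{-1} D^2 h$ and H\"older's inequality, but they deploy them differently. The paper first applies the Cauchy--Schwarz-type bound $\int {\rm Tr}\,d\mu \cdot \int ({\rm Tr})^{-1}\,d\mu \ge 1$, then uses AM--GM on the reciprocal $({\rm Tr})^{-1}\le \bigl((n-1)\det^{1/(n-1)}\bigr)^{-1}$, and finally chains a two-factor H\"older (with exponents $n-1$ and $(n-1)/(n-2)$) together with a Jensen step for $t\mapsto t^{1/(n-2)}$. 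You instead apply AM--GM directly to ${\rm Tr}$ (giving the sharper pointwise lower bound) and absorb everything else into a single three-factor H\"older on the identity $\int h\det D^2 h\,dx = n|\Omega_h|$, with weights $\frac{1}{n+1},\frac{n-1}{n+1},\frac{1}{n+1}$. Your decomposition check is accurate, and the resulting inequality $(n|\Omega_h|)^{n+1}\le (n|\Omega_f|)(n|\Omega_h|A)^{n-1}(n|\Omega_h|B)$ rearranges exactly to $A^{n-1}B\ge |\Omega_h|/|\Omega_f|$, which is what is needed. Your approach avoids the extra Cauchy--Schwarz and Jensen steps and packages the estimate into one H\"older application; the paper's approach has the minor advantage that each individual step is more standard and perhaps easier to guess without solving for the weights. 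Your closing remark is also correct: Blaschke--Santal\'o is not used in this proposition and enters only in the subsequent application to $\mu=\sigma$.
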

\begin{proof}
We start with the H\"older inequality
$$
\int_{S^{n-1}} {\rm Tr}(D^2 f)^{-1} (D^2 h) d \mu  \cdot
\int_{S^{n-1}} \frac{d \mu}{{\rm Tr}(D^2 f)^{-1} (D^2 h) }   \ge 1.
$$
Applying the arithmetic-geometric inequality and   H\"older inequality again one gets
\begin{align*}
&
\int_{S^{n-1}} \frac{1}{{\rm Tr}(D^2 f)^{-1} (D^2 h)  } d \mu  \le 
\int_{S^{n-1}} \frac{1}{ (n-1) \det^{\frac{1}{n-1}} [(D^2 f)^{-1} (D^2 h)]  } d \mu
 =  
\\& = \frac{1}{(n-1) n|\Omega_h|} \int_{S^{n-1}}  (\det D^2 f)^{\frac{1}{n-1}}  (\det D^2 h)^{\frac{n-2}{n-1}}  \ h \ d x
\\& \le  \frac{1}{(n-1) n|\Omega_h|} \Bigl(  \int_{S^{n-1}}f  \det D^2 f d x \Bigr)^{\frac{1}{n-1}}
\Big( \int_{S^{n-1}}  \frac{h^{ \frac{n-1}{n-2}}}{f^{\frac{1}{n-2}}} \det D^2 h \ d x\Bigr)^{\frac{n-2}{n-1}}.
\end{align*}
The integral  relation $ \int_{S^{n-1}} f  \det D^2 f d x = n |\Omega_f|$  implies
\begin{align*}
\int_{S^{n-1}} & \frac{d \mu}{{\rm Tr}(D^2 f)^{-1} (D^2 h)  }    \le 
 \frac{(  n |\Omega_f|)^{\frac{1}{n-1}}}{(n-1) n|\Omega_h|} 
\Big( \int_{S^{n-1}}  \frac{h^{\frac{1}{n-2}}}{f^{\frac{1}{n-2}}}   h \det D^2 h d x \Bigr)^{\frac{n-2}{n-1}}
\\& =  \frac{1}{n-1} \Bigl(\frac{|\Omega_f|}{|\Omega_h|} \Bigr)^{\frac{1}{n-1}}
\Big( \int_{S^{n-1}}  \frac{h^{\frac{1}{n-2}}}{f^{\frac{1}{n-2}}} d \mu \Bigr)^{\frac{n-2}{n-1}}
 \le  \frac{1}{n-1} \Bigl(\frac{|\Omega_f|}{|\Omega_h|} \Bigr)^{\frac{1}{n-1}}
\Bigl( \int_{S^{n-1}}  \frac{h}{f} d \mu \Bigr)^{\frac{1}{n-1}}.
\end{align*}
This  claim follows.
\end{proof}

Let us consider the case of
the uniform probability measure on $S^{n-1}$:
$$
\mu = \sigma. 
$$
The representation 
(\ref{EK})  takes the form
$$
     \frac{1}{n} Ent(\nu) - K({\sigma},\nu) =
     - \frac{1}{n} \int_{S^{n-1}} \log \det \Bigl( Id + \frac{\nabla^2 h}{h}\Bigr) d {\sigma}.
$$

\begin{theorem}
\label{210518}
The following inequality holds for every symmetric probability measure $\nu$:
\begin{equation}
\label{SphTal}
   \frac{1}{n} Ent(\nu) \ge K({\sigma},\nu).
\end{equation}

Equivalently, every even twice continuously differentiable support function $h$ satisfies
\begin{equation}
\label{leb-log}
 \int_{S^{n-1}} \log \det \Bigl( Id + \frac{\nabla^2 h}{h}\Bigr) d \sigma \le 0.
\end{equation}

Moreover, a stronger inequality holds:
\begin{equation}
\label{050518}
\frac{1}{n-1}\int_{S^{n-1}} {\rm Tr}(D^2 h)^{-1} d\sigma \ge \int_{S^{n-1}} \frac{d\sigma}{h}.
\end{equation}

Equalities  in (\ref{210518}),  (\ref{leb-log}), and  (\ref{050518}) hold only for constant $h$. 
\end{theorem}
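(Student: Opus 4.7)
The starting observation is that $(\ref{SphTal})$ and $(\ref{leb-log})$ are equivalent via $(\ref{EK})$: substituting $\mu=\sigma$ (so that $Ent(\sigma)=0$) yields
$$\frac{1}{n}Ent(\nu)-K(\sigma,\nu)=-\frac{1}{n}\int_{S^{n-1}}\log\det\Bigl(Id+\tfrac{\nabla^2 h}{h}\Bigr)d\sigma,$$
so it suffices to establish $(\ref{leb-log})$ and the stronger $(\ref{050518})$, and then trace the equality cases.

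For $(\ref{leb-log})$ the plan is to combine Jensen's inequality applied to $\log(h\det D^2 h)$ with the Blaschke--Santal\'o inequality. Writing $\log\det(Id+\nabla^2 h/h)=\log\det D^2 h-(n-1)\log h$ and using the identity $\int_{S^{n-1}} h\det D^2 h\,d\sigma=|\Omega_h|/|B|$ (immediate from the cone-measure computation of the previous section), the concavity of $\log$ gives
$$\int_{S^{n-1}}\log h\,d\sigma+\int_{S^{n-1}}\log\det D^2 h\,d\sigma\le \log\frac{|\Omega_h|}{|B|}.$$
To dominate $\log(|\Omega_h|/|B|)$ by $n\int\log h\,d\sigma$, I would use that $1/h$ is the radial function of the polar body, whence $|\Omega_h^\circ|=|B|\int h^{-n}d\sigma$; the symmetric Blaschke--Santal\'o inequality $|\Omega_h|\,|\Omega_h^\circ|\le|B|^2$ then yields $|\Omega_h|/|B|\le \bigl(\int h^{-n}d\sigma\bigr)^{-1}$, and a second application of Jensen to $-n\log h$ gives $\bigl(\int h^{-n}d\sigma\bigr)^{-1}\le \exp\bigl(n\int\log h\,d\sigma\bigr)$. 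Chaining these two estimates proves $(\ref{leb-log})$.

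For the stronger inequality $(\ref{050518})$ I would apply Proposition \ref{trfh} with its $h$ set equal to the constant $1$ (so its driving measure becomes $\sigma$ and $D^2(1)=Id$) and its $f$ equal to our $h$. The proposition then reads
$$\frac{1}{n-1}\int_{S^{n-1}}{\rm Tr}(D^2 h)^{-1}d\sigma\ge \Bigl(\frac{|B|}{|\Omega_h|}\Bigr)^{\frac{1}{n-1}}\Bigl(\int\frac{d\sigma}{h}\Bigr)^{-\frac{1}{n-1}},$$
and it suffices to verify $|B|/|\Omega_h|\ge \bigl(\int h^{-1}d\sigma\bigr)^n$. This follows by combining the elementary Jensen bound $\bigl(\int h^{-1}d\sigma\bigr)^n\le \int h^{-n}d\sigma$ with the Blaschke--Santal\'o estimate $|\Omega_h|\int h^{-n}d\sigma\le|B|$ already noted.

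For the equality analysis I would trace equality through the chain: equality in the second Jensen, applied to $h^{-n}$, forces $h$ to be constant on $S^{n-1}$, which already identifies $\Omega_h$ as a ball. The main obstacle is that Jensen's first step controls $\int\log\det D^2 h\,d\sigma$ from above by a quantity involving $\log(|\Omega_h|/|B|)$ that runs in the \emph{wrong direction} for the target; the key observation is that Blaschke--Santal\'o applied to $\Omega_h$ and its polar, coupled with a second Jensen for $h^{-n}$, supplies precisely the complementary comparison $\log(|\Omega_h|/|B|)\le n\int\log h\,d\sigma$ needed to close the argument and, simultaneously, to deliver the stronger trace inequality.
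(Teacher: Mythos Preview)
Your direct proof of $(\ref{leb-log})$ is correct and takes a genuinely different route from the paper. The paper derives $(\ref{leb-log})$ from $(\ref{050518})$ via an interpolation argument: setting $h_t = 1-t+th$ and
\[
f(t)=\int_{S^{n-1}}\Bigl[\log\det D^2 h_t-(n-1)\log h_t\Bigr]d\sigma,
\]
one checks $f(0)=0$ and that $f'(t)\le 0$ is exactly $(\ref{050518})$ applied to $h_t$. Your argument bypasses this entirely: Jensen on $\log(h\det D^2 h)$ gives $\int\log h\,d\sigma+\int\log\det D^2 h\,d\sigma\le\log(|\Omega_h|/|B|)$, and Blaschke--Santal\'o combined with a second Jensen on $h^{-n}$ yields $\log(|\Omega_h|/|B|)\le n\int\log h\,d\sigma$; subtracting gives $(\ref{leb-log})$. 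This is shorter and makes the role of Blaschke--Santal\'o more transparent; the paper's interpolation, on the other hand, exhibits $(\ref{050518})$ as the genuine infinitesimal content of $(\ref{leb-log})$.

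For $(\ref{050518})$ your use of Proposition~\ref{trfh} with $h\equiv 1$ and $f$ equal to the given support function, followed by Jensen and Blaschke--Santal\'o to get $|B|/|\Omega_h|\ge(\int h^{-1}d\sigma)^n$, is exactly the paper's argument. However, there is a gap: Proposition~\ref{trfh} is stated and proved only for $n>2$ (its H\"older step uses exponents $\tfrac{1}{n-1}$ and $\tfrac{n-2}{n-1}$ and then $\tfrac{1}{n-2}$), so your argument does not cover $n=2$. The paper treats $n=2$ separately: after changing variables to $\partial\Omega$, inequality $(\ref{050518})$ becomes $\int_{\partial\Omega}k^2\,dx\ge\int_{\partial\Omega}k/\langle x,n_{\partial\Omega}\rangle\,dx$, and this is obtained by combining Gage's inequality $\int_{\partial\Omega}k^2\,dx\ge \pi|\partial\Omega|/|\Omega|$, a Blaschke--Santal\'o bound on $\int_{S^1}h^{-1}dx$, and the isoperimetric inequality. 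You should add this planar case.

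Your equality analysis is fine: equality in the Jensen step for $h^{-n}$ (or for $h^{-1}$ versus $h^{-n}$ in the trace inequality) already forces $h$ to be constant, which suffices.
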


\begin{remark}
Inequality (\ref{SphTal}) is a spherical variant of the Gaussian Talagrand transportation inequality 
$$
Ent_{\gamma}(\nu) \ge \frac{1}{2} W^2_2(\gamma, \nu),
$$
where $\gamma$ is the standard Gaussian measure.

In order to compare both inequalities let us note that
$$
\log \frac{1}{\langle x, y \rangle} = \log \frac{2}{2 - |x-y|^2}.
$$
Using convexity of the function $\log \frac{2}{2-t}$ and the Jenssen's inequality one gets
$$
     \frac{1}{n} Ent(\nu) \ge K(\nu,\sigma) \ge \log \frac{2}{2 - {\tilde{W}^2_2(\sigma, \nu)}}.
$$
Equivalently
$$
\frac{1}{2} {\tilde{W}^2_2(\sigma, \nu)} \le 1 - e^{\frac{1}{n} Ent(\nu) }.
$$
Here $$\tilde{W}^2_2 (\sigma,\nu) = \inf_{\pi \in \Pi(\sigma,\nu)} \int_{S^{n-1} \times S^{n-1}} |x-y|^2 d \pi$$ 
(note that  this is not the standard  Kantorovich functional $W_2$ because the cost function is equal to the squared Euclidean distance).

Analogously, for the Riemannian $l^1$-transportation cost 
$$W_1 (\sigma,\nu) = \inf_{\pi \in \Pi(\sigma,\nu)} \int_{S^{n-1} \times S^{n-1}} d(x,y) d \pi,
$$ where $d(x,y)$ is the standard Riemannian distance on $S^{n-1}$, one gets
\begin{align*}
\frac{1}{n} Ent(\nu) & \ge - \int_{S^{n-1}} \log \cos(d(x,T(x)) d \nu \ge - \log \cos  \int_{S^{n-1}} d(x,T(x) d \nu 
\\& \ge - \log \cos  W_1 (\sigma,\nu).
\end{align*} Hence
$$
W_1 (\sigma,\nu)  \le \arccos e^{-\frac{1}{n} Ent(\nu)}.
$$
\end{remark}
 
 \begin{remark}
Another transportation inequality for the sphere has been obtained by D.~Cordero-Erausquin in \cite{CE} for general 
(non-symmetric) measures:
$$
W_{c}(f \cdot \sigma, \sigma) \le n - n \int f^{1 - \frac{1}{n}} d \sigma,
$$
where $$c(x,y) = n - \frac{\sin^{n-1}(d)}{S^{n-1}_n(d)} - (n-1) \frac{S_n(d)}{\tan(d)}$$
and $S_n(d) = \Bigl(n \int_0^d \sin^{n-1}(s) ds\Bigr)^{\frac{1}{n}}$.
\end{remark}


\begin{proof}
Clearly, (\ref{SphTal})  follows from   (\ref{leb-log}) by the standard approximation arguments. 
Let us  derive  (\ref{leb-log})  from  (\ref{050518}).
Set:
$$
f(t) = \int_{S^{n-1}} \Bigl[ \log \det ( (1 + t(h-1)) I + t \nabla^2 h) - (n-1) \log ( 1 + t(h-1)) \Bigr] d\sigma.
$$
Note that (\ref{leb-log}) is equivalent to $f(1) \ge 0$. Since $f(0)=0$,
to prove  (\ref{leb-log}) it is sufficient to show that $f'(t) \le 0$, equivalently
$$
\int_{S^{n-1}} {\rm Tr} ( (1 + t(h-1)) I + t \nabla^2 h)^{-1} [ (h-1) I + \nabla^2 h] d\sigma \le (n-1) \int_{S^{n-1}} \frac{h-1}{1+t(h-1)} d\sigma. 
$$
Set $h_t = 1-t + t h$. Then the above inequality can be rewritten as follows:
$$
\frac{1}{t} \int_{S^{n-1}} {\rm  Tr} (D^2 h_t)^{-1} [  D^2 h_t - I] d\sigma \le \frac{(n-1)}{t} \int_{S^{n-1}} \frac{h_t-1}{h_t} d\sigma,
$$
which is equivalent to (\ref{050518}).

Let us  prove  (\ref{050518}).
Assume that  $n>2$. By Proposition \ref{trfh}
$$
\frac{1}{n-1} \int_{S^{n-1}} {\rm Tr}(D^2 h)^{-1} d \sigma  \ge \Bigl(\frac{|B|}{|\Omega_h|} \Bigr)^{\frac{1}{n-1}} \Bigl( \int_{S^{n-1}} \frac{d \sigma}{h}  \Bigr)^{-\frac{1}{n-1}},
$$
It remains  to prove that 
$$
\frac{|B|}{|\Omega_h|} \ge \Bigl( \int_{S^{n-1}}  \frac{d \sigma}{h} \Bigr)^n.
$$
Applying the volume formula for polar body $\Omega^{\circ}_h$ one gets
$$
 \Bigl( \int_{S^{n-1}}  \frac{d \sigma}{h}  \Bigr)^n \le  \int_{S^{n-1}}  \frac{d \sigma}{h^n}  = \frac{|\Omega^{\circ}_h|}{|B|}.
$$
The result follows from the Blaschke-Santal\'o inequality.

In the case $n=2$  inequality  (\ref{050518})  (after change of variables) reads as follows:
$$
\int_{\partial \Omega} k^2 dx \ge  \int_{\partial \Omega}  \frac{k}{\langle x, n_{\partial \Omega}\rangle} dx,
$$
where $k$ is curvature of $\partial \Omega$.
According to a result of M.E.~Gage \cite{Gage}
$$
\int_{\partial \Omega}  k^2 dx \ge \frac{\pi |\partial \Omega|}{\Omega}.
$$
By the Blaschke-Santal\'o inequality
\begin{equation}
\label{2dimBS}
  \int_{\partial \Omega}  \frac{k}{\langle x, n_{\partial \Omega} \rangle} dx = \int_{S^{n-1}} \frac{dx}{h} \le |S^1|^{1/2}   \sqrt{\int_{S^{n-1}} \frac{dx}{h^2}} 
  = |S^1|^{1/2}   \sqrt{ 2 |\Omega^{\circ}|}  \le 
 \frac{ 2 |\pi|^{\frac{3}{2}} }{\sqrt{|\Omega|}}.
\end{equation}
The result follows from the isoperimetric inequality.
\end{proof}
 
 It was shown by W.~Firey \cite{Firey} that  the log-Minkowski problem has the unique solution
for the case $\mu = \sigma$.  We give an independent proof of this result here.

\begin{theorem}
Assume that $h$ is a $C^2$-solution to the log-Minkowski problem for $\mu=\sigma$:
$$
h \det D^2 h =\frac{1}{|B|}, \ |\Omega_h|= 1.
$$
Then $\Omega_h$ is the ball of volume one.
\end{theorem}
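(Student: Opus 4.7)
The plan is to apply inequality (\ref{050518}) of Theorem \ref{210518} to $h$ and show that, because $h$ satisfies the log-Minkowski equation $h \det D^2 h = 1/|B|$, the inequality becomes an equality. Since Theorem \ref{210518} asserts that equality in (\ref{050518}) holds only when $h$ is constant, it will follow that $h$ is constant, and then $|\Omega_h|=1$ pins down $h \equiv |B|^{-1/n}$, so $\Omega_h$ is the ball of volume one.

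The main ingredient is an identity valid for every $C^2$ support function $h$:
\[
\int_{S^{n-1}} h\,{\rm Tr}({\rm cof}(D^2 h))\,d\sigma \;=\; (n-1)\int_{S^{n-1}} \det D^2 h\,d\sigma.
\]
I would derive it from the symmetry of mixed volumes: compare the two representations of $V(\Omega_h[n-1],B)$, first as $\tfrac{1}{n}\int \det D^2 h\,d\mathcal{H}^{n-1}$ using $dS(\Omega_h[n-1]) = \det D^2 h\,d\mathcal{H}^{n-1}$, and second, after permuting arguments, as $V(\Omega_h[n-2],B,\Omega_h) = \tfrac{1}{n(n-1)}\int h\,{\rm Tr}({\rm cof}(D^2 h))\,d\mathcal{H}^{n-1}$, using that the density of $dS(\Omega_h[n-2],B)$ is the mixed discriminant $D(D^2 h[n-2],I) = \tfrac{1}{n-1}{\rm Tr}({\rm cof}(D^2 h))$. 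Equivalently, it drops out of the Minkowski first variation formula $\delta|\Omega_h| = \int u\det D^2 h\,d\mathcal{H}^{n-1}$ combined with the Jacobi formula $\delta(\det D^2 h) = {\rm Tr}({\rm cof}(D^2 h)\cdot D^2 u)$ by taking $u\equiv 1$.

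With this identity, the proof closes quickly. The equation $h\det D^2 h = 1/|B|$ gives
\[
{\rm Tr}(D^2 h)^{-1} \;=\; \frac{{\rm Tr}({\rm cof}(D^2 h))}{\det D^2 h} \;=\; |B|\,h\,{\rm Tr}({\rm cof}(D^2 h)),
\]
so integrating and applying the identity yields $\frac{1}{n-1}\int {\rm Tr}(D^2 h)^{-1}\,d\sigma = |B|\int \det D^2 h\,d\sigma = \int d\sigma/h$, which is exactly equality in (\ref{050518}); the equality characterization in Theorem \ref{210518} then forces $h$ constant. The only delicate point is the mixed-volume identity: it is classical but requires either quoting the density of $dS(\Omega_h[n-2],B)$ via the mixed discriminant, or invoking the first variation formula for $|\Omega_h|$ explicitly.
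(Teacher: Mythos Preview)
Your proof is correct and follows the same overall strategy as the paper: show that $h$ realizes equality in (\ref{050518}) and then invoke the equality characterization of Theorem~\ref{210518}. The difference lies only in how the key integral identity $\frac{1}{n-1}\int_{S^{n-1}} {\rm Tr}(D^2 h)^{-1}\,d\sigma = \int_{S^{n-1}} d\sigma/h$ is obtained. You derive it from the classical mixed-volume symmetry $V(\Omega_h[n-1],B) = V(\Omega_h[n-2],B,\Omega_h)$ via the mixed-discriminant density of $S(\Omega_h[n-2],B;\cdot)$; the paper instead uses its own Dirichlet-form machinery, observing that by Theorem~\ref{ibp} with $g\equiv 1$ one has $\int_{S^{n-1}} L_{\mu,\nu}(u/h)\,d\mu = 0$, and plugging $u=1$ into the cone-measure form (\ref{Lcone}) of $L_{\mu,\nu}$ (here $\mu=\sigma$) gives exactly $\int_{S^{n-1}} {\rm Tr}(D^2 h)^{-1}\,d\sigma = (n-1)\int_{S^{n-1}} d\sigma/h$. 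These two derivations are the same integration-by-parts identity in different clothing; your route is self-contained and avoids the operator formalism, while the paper's route illustrates that the identity drops out of the transportational framework developed in the earlier sections.
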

\begin{proof}
Set $\nu = {r^n} |B| \cdot \sigma $, where $r$ is the radial function of $\Omega_h$.
According to  (\ref{Lcone}) and Theorem \ref{ibp}
$$
0  = \int_{S^{n-1}} L_{\mu,\nu} h d\mu = \int_{S^{n-1}} {\rm Tr}(D^2 h)^{-1} d \mu-  (n-1)\int_{S^{n-1}} \frac{d \mu}{h}.
$$
On the other hand, according to Theorem \ref{210518} equality in (\ref{050518}) holds only for constant $h$. This implies the claim.
\end{proof}

\begin{remark}
In the same way as for the uniform measure uniqueness of  solution  to the log-Minkowski problem for arbitrary regular  $\mu$ 
can be derived from the following conjectured inequality
\begin{equation}
\label{trfh-2}
\frac{1}{n-1}\int_{S^{n-1}} {\rm Tr}(D^2 f)^{-1} (D^2 h) d \mu \ge \int_{S^{n-1}} \frac{h}{f} d \mu,
\end{equation}
where $$\mu = \frac{1}{n |\Omega_h|} h \det D^2 h  \cdot \mathcal{H}^{n-1}|_{S^{n-1}},$$   $h, f$ are arbitrary even $C^2(S^{n-1})$ support functions,
provided the equality case holds if and only if $\frac{h}{f}$ is constant.
\end{remark}

Inequality (\ref{trfh-2}) is reminiscent of several inequalities appearing in relation to Minkowski-type problems.
First, note that the log-Brunn--Minkowski inequality is equivalent to the following inequality for support functions $h, f$ (see \cite{BLYZ0}):
\begin{equation}
\label{log-Min}
\int_{S^{n-1}} \log \Bigl( \frac{f}{h} \Bigr) d \mu \ge  \frac{1}{n} \log \frac{|\Omega_f|}{|\Omega_h|}.
\end{equation}
In addition, it was shown in  \cite{CLM} that the log-Brunn--Minkowski inequality has the following local version (see Proposition 4.4):
\begin{align*}
\int_{S^{n-1}} \frac{1 + {\rm Tr} (D^2 h)^{-1}  h}{h^2} u^2 d \mu - n \Bigl( \int_{S^{n-1}} \frac{u}{h} d \mu \Bigr)^2  \le 
\int_{S^{n-1}} \frac{\langle (D^2 h)^{-1} \nabla_{S^{n-1}} u, \nabla_{S^{n-1}} u \rangle}{h} d \mu,
\end{align*}
$u \in C^1(S^{n-1})$ (see \cite{KM} for a more general version for $p$-Brunn--Minkowski inequality).

\begin{remark}
One can conjecture a stronger inequality which by Proposition \ref{trfh} implies (\ref{trfh-2}) (and \ref{log-Min}):
\begin{equation}
\label{fh-false}
\frac{|\Omega_h|^{\frac{1}{n}}}{|\Omega_f|^{\frac{1}{n}}} \ge \int_{S^{n-1}} \frac{h}{f} d \mu.
\end{equation}
But this inequality does not hold even for $f=1$. Indeed, in this case 
(\ref{fh-false}) is equivalent to
$
\int_{S^{1}} h^2 \det D^2 h d x \le n \frac{|\Omega_h|^{1 + \frac{1}{n}}}{|B|^{\frac{1}{n}}}
$
and, finally, to
$
\int_{\partial \Omega_h} \langle x, n_{\partial \Omega} \rangle^2 d x \le \frac{2}{\sqrt{\pi}} {|\Omega_h|^{\frac{3}{2}}}
$
(for $n=2$). But this inequality fails for a thin long cylinder $[-1,1] \times [- R, R]$ for sufficiently large $R$.
  
  Note, however, that (\ref{trfh-2}) holds for $n=2$, $f=1$. Indeed, in this case (\ref{trfh-2}) reads as
  $$
  \int_{S^1} h (h + h'')^2 dx\ge \int_{S^1} h^2 (h + h'') dx . 
  $$
  Changing variables one gets that the latter is equivalent to
  $$
  \int_{\partial \Omega} \frac{\langle x, n_{\partial \Omega} \rangle}{k} dx \ge \int_{\partial \Omega} \langle x, n_{\partial \Omega} \rangle^2 dx,
  $$
  where $k$ is curvature of $\partial \Omega$. To prove it let us use a Bonnessen-type inequality  (see \cite{Gage}):
  $$
   \langle x, n_{\partial \Omega} \rangle |\partial \Omega| \ge |\Omega| + \pi  \langle x, n_{\partial \Omega} \rangle^2.
  $$
  Integrating over $\partial \Omega$  and applying $\int_{\partial \Omega}  \langle x, n_{\partial \Omega} \rangle  dx = 2 |\Omega|$ one gets
  $$
  \int_{\partial \Omega} \langle x, n_{\partial \Omega} \rangle^2 dx \le \frac{|\Omega| |\partial \Omega|}{\pi}.
  $$
  Using (\ref{2dimBS}) and Cauchy inequality one gets
  $$
   \frac{ 2 |\pi|^{\frac{3}{2}} }{\sqrt{|\Omega|}} \int_{\partial \Omega} \frac{\langle x, n_{\partial \Omega} \rangle}{k} dx  \ge   \int_{\partial \Omega} \frac{\langle x, n_{\partial \Omega} \rangle}{k} dx  \int_{\partial \Omega} \frac{k} {\langle x, n_{\partial \Omega} \rangle} dx \ge  |\partial \Omega|^2 .
  $$
  Finally
  $$
   \int_{\partial \Omega} \frac{\langle x, n_{\partial \Omega} \rangle}{k} dx
   \ge    \frac{|\partial \Omega|^2 \sqrt{|\Omega|}}{ 2 |\pi|^{\frac{3}{2}} }.
  $$
  The result follows from the  isoperimetric inequality.
  \end{remark}

\end{document}